\documentclass[reqno, 12pt]{amsart}
\pdfoutput=1
\makeatletter
\let\origsection=\section \def\section{\@ifstar{\origsection*}{\mysection}}
\def\mysection{\@startsection{section}{1}\z@{.7\linespacing\@plus\linespacing}{.5\linespacing}{\normalfont\scshape\centering\S}}
\makeatother

\usepackage{amsmath,amssymb,amsthm}
\usepackage{mathrsfs}
\usepackage{mathabx}\changenotsign
\usepackage{dsfont}

\usepackage{graphicx}
\usepackage{float}

\usepackage{xcolor}
\usepackage{hyperref}
\hypersetup{
	colorlinks,
	linkcolor={red!60!black},
	citecolor={green!60!black},
	urlcolor={blue!60!black}
}

\definecolor{codelightgray}{gray}{0.8}
\definecolor{codeverylightgray}{gray}{0.9}

\usepackage{array,multirow,colortbl,enumerate}
\usepackage{caption} \captionsetup[table]{skip=10pt}

\usepackage{graphicx}

\usepackage[open,openlevel=2,atend]{bookmark}

\usepackage[abbrev,msc-links]{amsrefs}
\usepackage{amsrefs}
\usepackage{doi}

\renewcommand{\PrintDOI}[1]{\doi{#1}}

\usepackage[T1]{fontenc}
\usepackage{lmodern}
\usepackage[babel]{microtype}
\usepackage[english]{babel}

\linespread{1.28}
\usepackage{geometry}
\geometry{left=27.5mm,right=27.5mm, top=25mm, bottom=25mm}
\numberwithin{equation}{section}
\numberwithin{figure}{section}

\usepackage{enumitem}
\def\rmlabel{\upshape({\itshape \roman*\,})}

\def\alabel{\upshape({\itshape \alph*\,})}

\let\polishlcross=\l
\def\l{\ifmmode\ell\else\polishlcross\fi}

\def\paragraph#1{	\noindent\textbf{#1.}\enspace}

\let\setminus=\smallsetminus

\let\sm=\setminus

\makeatletter
\def\moverlay{\mathpalette\mov@rlay}
\def\mov@rlay#1#2{\leavevmode\vtop{   \baselineskip\z@skip \lineskiplimit-\maxdimen
		\ialign{\hfil$\m@th#1##$\hfil\cr#2\crcr}}}
\newcommand{\charfusion}[3][\mathord]{
	#1{\ifx#1\mathop\vphantom{#2}\fi
		\mathpalette\mov@rlay{#2\cr#3}
	}
	\ifx#1\mathop\expandafter\displaylimits\fi}
\makeatother

\newcommand{\dcup}{\charfusion[\mathbin]{\cup}{\cdot}}
\newcommand{\bigdcup}{\charfusion[\mathop]{\bigcup}{\cdot}}

\DeclareFontFamily{U}  {MnSymbolC}{}
\DeclareSymbolFont{MnSyC}         {U}  {MnSymbolC}{m}{n}
\DeclareFontShape{U}{MnSymbolC}{m}{n}{
	<-6>  MnSymbolC5
	<6-7>  MnSymbolC6
	<7-8>  MnSymbolC7
	<8-9>  MnSymbolC8
	<9-10> MnSymbolC9
	<10-12> MnSymbolC10
	<12->   MnSymbolC12}{}
\DeclareMathSymbol{\powerset}{\mathord}{MnSyC}{180}

\usepackage{tikz}
\usetikzlibrary{calc,decorations.pathmorphing,decorations.pathreplacing}
\pgfdeclarelayer{background}
\pgfdeclarelayer{foreground}
\pgfdeclarelayer{front}
\pgfsetlayers{background,main,foreground,front}

\usepackage{subcaption}
\captionsetup[subfigure]{labelfont=rm}

\let\epsilon=\varepsilon

\let\rho=\varrho
\let\theta=\vartheta
\let\kappa=\varkappa

\def\NN{{\mathds N}}

\def\ZZ{{\mathds Z}}

\def\RR{{\mathds R}}

\def\dd{\mathrm{d}}

\theoremstyle{plain}
\newtheorem{thm}{Theorem}[section]
\newtheorem{theorem}[thm]{Theorem}

\newtheorem{prop}[thm]{Proposition}
\newtheorem{claim}[thm]{Claim}
\newtheorem{fact}[thm]{Fact}

\newtheorem{lemma}[thm]{Lemma}

\theoremstyle{definition}

\usepackage{accents}

\let\lra=\longrightarrow

\let\phi=\varphi

\begin{document}

\keywords{Measure theory, optimization, graph colourings}
\subjclass[2020]{28A25 (primary), 05C15, 05C35 (secondary)}
	
\title[Sets and partitions minimising small differences]{Sets and partitions minimising small differences}
	
\author[S. Antoniuk]{Sylwia Antoniuk}
\address{Department of Discrete Mathematics, Adam Mickiewicz University, Pozna\'n, Poland}
\email{antoniuk@amu.edu.pl}
	
\author[Chr. Reiher]{Christian Reiher}
\address{Fachbereich Mathematik, Universit\"at Hamburg, Hamburg, Germany}
\email{christian.reiher@uni-hamburg.de }
		
\begin{abstract}
	For a bounded measurable set $A\subseteq \RR$ we denote the Lebesgue measure 
	of $\{(x, y)\in A^2\colon x\le y\le x+1\}$ by $\Phi(A)$. We prove that if 
	$I=A_1\dcup\dots\dcup A_{k+1}$ partitions an interval $I$ of length $L$ into 
	$k+1$ measurable pieces, then 
	$\sum_{i=1}^{k+1} \Phi(A_i)\ge (\sqrt{k^2+1}-k)L-1$, where the multiplicative 
	constant $\sqrt{k^2+1}-k$ is optimal. 
	As a matter of fact we obtain the more general result 
	that $\Phi(A)\ge (\xi+\sqrt{1-2\xi+2\xi^2}-1)L-1$ whenever $A\subseteq I$ 
	has measure~$\xi L$. 
		\end{abstract}
	
	\maketitle

\section{Introduction}

\subsection{Colouring} 
The following discrete combinatorial question has recently surfaced 
in joint work of Dudek, Ruci\'nski, and the first author~\cite{ADR} 
on so-called ``randomly augmented graphs''. Given three integers $k\ge 0$
and $m, n\ge 1$, they ask for a colouring of the set $\{1, \dots, n\}$
with $k+1$ colours such that the number of pairs of integers $\{x, y\}$ 
which have the same colour and distance at most~$m$ is minimised; 
here one should imagine $n$ to be much larger than~$k$ and~$m$. If 
the desired minimum number of pairs could be determined with an accuracy 
of $\pm O_{k, m}(1)$, then some interesting results in extremal graph 
theory would follow. 

The approach to this problem pursued in this article is that it should be
easier to address the corresponding analytical problem first. This is suggested
by experience and analogy with other problems in extremal combinatorics 
of which we would like to mention the Lov\'asz-Simonovits triangle density 
problem~\cite{LoSi} as a prototypical example. In its original formulation the 
problem asks to determine, for given numbers $n$ and $e$, the least 
number~$t(n, e)$ of triangles that a graph on $n$ vertices with $e$ edges can 
contain. Despite a~recent breakthrough by Liu, Pikhurko, and Staden~\cite{LPS} this 
question is still open, whereas the corresponding analytical question to 
determine the function 
$f_3(\gamma)=\lim_{n\to\infty} t(n, \lceil \gamma n^2\rceil)/n^3$ has been
solved by Razborov~\cite{Raz} (see also~\cites{Niki, cdt} for generalisations 
to larger cliques). A perhaps more transparent, equivalent definition of
$f_3(\gamma)$, featured in Lov\'asz's book~\cite{Lov}, is that it is the 
minimum value of $\frac16\int_{[0, 1]^3}W(x, y)W(x, z)W(y, z)\,\dd x\,\dd y\,\dd z$
as $W$ varies over all symmetric measurable functions $[0, 1]^2\lra [0, 1]$
satisfying $\int_{[0, 1]^2}W(x, y)\,\dd x\,\dd y=2\gamma$. 

In order to formulate an analytical version of the colouring problem we mentioned 
in the beginning, we replace the discrete set $\{1, \dots, n\}$ by a real 
interval~$I$ of some given length $L$. Colourings of $I$ with $k+1$ colours are 
essentially the 
same as partitions $I=A_1\dcup\dots\dcup A_{k+1}$ of $I$ into $k+1$ classes. 
Instead of counting certain pairs of integers, we shall consider the measure of an 
analogous set, and thus we always need to require that the partition classes 
$A_1, \dots, A_{k+1}$ be Lebesgue measurable. One advantage of the current setup 
is that we can eliminate the parameter $m$ by rescaling it to $1$. Then the 
number of small-distance pairs with colour $i$ corresponds to the (two dimensional 
Lebesgue) measure $\Phi(A_i)$ of the set 
$\bigl\{(x, y)\in A_i^2\colon x\le y\le x+1\bigr\}$
and the question becomes how small the sum $\Phi(A_1)+\dots+\Phi(A_{k+1})$ can be. 
       
Let us look at a concrete example. For a given positive real number $t$ we first 
divide the interval $I=[0, L]$ into consecutive intervals 
$I_0=[0, t), I_1=[t, 2t), \dots$ of length $t$ and then for every 
$i=1, \dots, k+1$ we define $A_i$ to be the union of all intervals $I_j$ 
whose index~$j$ is congruent to~$i$ modulo~$k+1$. It can easily be shown that for 
$t\in \bigl[\frac 1{k+1}, \frac 1k\bigr]$ the sum $\Phi(A_1)+\dots+\Phi(A_{k+1})$
evaluates to $\frac12\bigl((k^2+1)t-2k+t^{-1}\bigr)L+O(1)$. The factor in front 
of~$L$ attains its least value $\sqrt{k^2+1}-k$ for $t=(k^2+1)^{-1/2}$. 
Theorem~\ref{thm:1} below asserts that this example is essentially optimal. 
Its statement avoids the error term $O(1)$ by addressing a circle $S=\RR/L\ZZ$ 
of perimeter $L\ge 1$ rather than an interval $I$ of length $L$. In this context, we can still define
\[
	\Phi(A)=\lambda\bigl(\{(x,y)\in A^2\colon x\leq y\leq x+1\}\bigr)
\]
for every measurable set $A\subseteq S$, but now the advantage is that pairs 
`wrapping around the circle' such as, for instance, the pairs in 
$[L-1/2, L)\times [0, 1/2)$ can contribute to $\Phi(A)$. 

\begin{theorem}\label{thm:1}    
	Let $k\ge 0$ be an integer and let $L\ge 1$ be real. 
	If $A_1\dcup\dots\dcup A_{k+1}$ partitions the circle $\RR/L\ZZ$
	into $k+1$ measurable parts, then
		\[
		\Phi(A_1)+\Phi(A_2)+\dots+\Phi(A_{k+1})
		\ge 
		(\sqrt{k^2+1}-k)L\,.
	\]
	\end{theorem}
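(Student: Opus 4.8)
The plan is to deduce Theorem~\ref{thm:1} from the single--set estimate underlying it. Put $h(\xi)=\xi-1+\sqrt{1-2\xi+2\xi^2}$ for $\xi\in[0,1]$; the claim to aim for is that $\Phi(A)\ge h\bigl(\lambda(A)/L\bigr)\,L$ for every measurable $A\subseteq S=\RR/L\ZZ$. Granting this, Theorem~\ref{thm:1} follows at once: with $\xi_i=\lambda(A_i)/L$ (so $\xi_1+\dots+\xi_{k+1}=1$),
\[
\Phi(A_1)+\dots+\Phi(A_{k+1})\ \ge\ L\sum_{i=1}^{k+1}h(\xi_i)\ \ge\ L(k+1)\,h\Bigl(\tfrac1{k+1}\Bigr)\ =\ (\sqrt{k^2+1}-k)\,L,
\]
where the middle inequality is Jensen's inequality --- $h$ is convex since, writing $u=2\xi-1$ so that $1-2\xi+2\xi^2=\tfrac12(u^2+1)$, one gets $h''(\xi)=2\sqrt2\,(u^2+1)^{-3/2}>0$ --- and the last equality is a direct computation (the radicand of $h(\tfrac1{k+1})$ is $\tfrac{k^2+1}{(k+1)^2}$). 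The periodic colouring of the introduction realises equality in every step.

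So the task reduces to the single--set inequality, and I would proceed as follows. From $\rho_{S\setminus A}(t)=L-\lambda\bigl(A\cup(A-t)\bigr)=L-2\xi L+\rho_A(t)$ (inclusion--exclusion, with $\xi=\lambda(A)/L$) one gets the identity $\Phi(S\setminus A)=\Phi(A)+(1-2\xi)L$; since $h(1-\xi)+(2\xi-1)=h(\xi)$, it is enough to treat $\xi\le\tfrac12$. Now introduce the autocorrelation $\rho(t)=\lambda\bigl(A\cap(A-t)\bigr)$, for which the substitution $y=x+t$ gives $\Phi(A)=\int_0^1\rho(t)\,\dd t$, and set
\[
D(t)\ =\ \xi L-\rho(t)\ =\ \lambda\bigl(\{x\in A:\ x+t\notin A\}\bigr).
\]
Then $D\ge0$, $D(0)=0$, $D$ is continuous (continuity of translation in $L^1$) and even ($D(-t)=\lambda(A\setminus(A+t))=\xi L-\rho(t)$); moreover $D$ is \emph{subadditive}, because $\{x\in A:x+s+t\notin A\}$ is contained in the union of $\{x\in A:x+s\notin A\}$ and $\{x\in A:x+s\in A,\ x+s+t\notin A\}$, the latter of which maps under $x\mapsto x+s$ into $\{y\in A:y+t\notin A\}$; and $\int_0^L D=\xi L^2-\int_0^L\rho=\xi L^2-(\xi L)^2=\xi(1-\xi)L^2$ since $\int_0^L\rho(t)\,\dd t=\int_S\mathbf 1_A(x)\bigl(\int_S\mathbf 1_A(x+t)\,\dd t\bigr)\dd x$. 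Writing $R=\int_0^1 D$ we have $\Phi(A)=\xi L-R$, so the target bound reads $R\le\bigl(1-\sqrt{1-2\xi+2\xi^2}\bigr)L$; since $1-\sqrt{1-2\xi+2\xi^2}$ is the smaller root of the quadratic $R\mapsto R^2-2LR+2\xi(1-\xi)L^2$ while the larger root is $\ge L>\xi L\ge R$, this follows from $\xi(1-\xi)L^2\ge LR-\tfrac12R^2$. Thus everything comes down to the \textbf{core inequality}
\[
\int_0^L D(t)\,\dd t\ \ge\ L\int_0^1 D(t)\,\dd t\ -\ \tfrac12\Bigl(\int_0^1 D(t)\,\dd t\Bigr)^{2}
\]
for an arbitrary non--negative, even, continuous, subadditive $D$ on $\RR/L\ZZ$ with $D(0)=0$.

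Proving this core inequality is, I expect, the main obstacle; it is essentially equivalent to the single--set inequality, and it is sharp, with equality precisely for the ``trapezoidal wave'' $D$ coming from the periodic colouring (whose period is $(1-2\xi+2\xi^2)^{-1/2}$), so no lossy estimate can succeed. I see two routes. The analytic one starts from the rewriting of subadditivity as $\rho(s)+\rho(t)-\rho(s+t)\le\xi L$ and integrates it against suitably chosen kernels, using the reflection symmetry $\rho(t)=\rho(L-t)$ to pin $\int_0^L\rho$ down in terms of $\int_0^1\rho$; the delicate point is to retain the quadratic term $-\tfrac12R^2$, without which one only gets a bound far weaker than $h(\xi)L$ (crude estimates give nothing useful for small $\xi$). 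The structural one uses that subadditivity forces $\{D\le\tau\}+\{D\le\tau\}\subseteq\{D\le2\tau\}$, an approximate--subgroup property of the sublevel sets, to argue --- via compactness and rearrangement --- that an extremiser must be a multiple of $t\mapsto\dist(t,c\ZZ)$, after which the one--parameter optimisation over $c$ is exactly the computation already carried out in the introduction. I would attempt the analytic route first, as it sidesteps the existence and regularity of minimisers.
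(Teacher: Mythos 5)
Your reductions are all correct, and the first one is a pleasant variant of what the paper does: you reduce Theorem~\ref{thm:1} to the single-set bound $\Phi(A)\ge(\xi+\sqrt{2\xi^2-2\xi+1}-1)L$ — which is precisely Theorem~\ref{thm:main} of the paper — and then sum using Jensen's inequality for the convex function $h$, where the paper instead uses Minkowski's inequality; both deductions are short and equally valid. Your autocorrelation bookkeeping is also sound: $\Phi(A)=\int_0^1\rho(t)\,\dd t$, the complement identity (the paper's Lemma~\ref{lemma:1} in disguise), $\int_0^LD=\xi(1-\xi)L^2$, subadditivity and evenness of $D$, and the observation that, since $R=\int_0^1D\le\xi L$, the target $R\le(1-W)L$ follows from nonnegativity of the quadratic, i.e.\ from your ``core inequality''.

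The genuine gap is that the core inequality is exactly where all of the content of the theorem lives, and you do not prove it — you say so yourself, offering only two speculative programmes (integrating the subadditivity relation against unspecified kernels, or a compactness-plus-rearrangement classification of extremisers), neither of which is carried out, and the first of which you concede has no visible way of retaining the indispensable quadratic term $-\tfrac12R^2$. Moreover, you have strengthened the problem in passing: your core inequality is asserted for an arbitrary nonnegative, even, continuous, subadditive $D$ with $D(0)=0$, whereas only autocorrelation-type $D$'s coming from sets are needed, so it is not even clear that the reformulated statement is true at the stated generality, and no argument is given either way. So what you have is a correct (and interesting) reformulation of Theorem~\ref{thm:main}, not a proof of Theorem~\ref{thm:1}. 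For comparison, the paper's actual proof of the single-set bound proceeds by first handling unions of $n$ intervals on circles of perimeter $L\ge12n$ (Lemma~\ref{lemma:2}), then running a compactness/variational argument on a minimal counterexample (Proposition~\ref{prop:32}): first-order perturbations of the endpoints force the conditions $g_A(a_i)=1+\frac{2\xi-1}{W}$ at boundary points and $g_A\ge\frac{\xi}{W}$ on $A$, which contradict a deterministic estimate for $1$-Lipschitz functions (Lemmas~\ref{lemma:3} and~\ref{lemma:4}); general measurable sets then follow by approximation. Your ``structural route'' gestures in this direction, but without the extremal setup, the perturbation claims, and the Lipschitz lemma, the main step remains missing.
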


As the discussion above suggests, this holds with equality if 
$n=\frac{\sqrt{k^2+1}L}{k+1}$ is an integer and every set $A_i$ consists of~$n$ 
intervals of length $\frac{1}{\sqrt{k^2+1}}$ that alternate cyclically 
(see Figure~\ref{Fig.2}).

\begin{figure}[h]
	
	\begin{center}
		\begin{tikzpicture}	
			\draw (0,0)	circle (1cm);	
			\draw [green,very thick,domain=70:110] plot ({cos(\x)}, {sin(\x)});	
			\draw [green,very thick,domain=190:230] plot ({cos(\x)}, {sin(\x)});	
			\draw [green,very thick,domain=310:350] plot ({cos(\x)}, {sin(\x)});	
			\draw [black,very thick,domain=110:150] plot ({cos(\x)}, {sin(\x)});	
			\draw [black,very thick,domain=230:270] plot ({cos(\x)}, {sin(\x)});	
			\draw [black,very thick,domain=350:390] plot ({cos(\x)}, {sin(\x)});	
			\draw [red,very thick,domain=150:190] plot ({cos(\x)}, {sin(\x)});	
			\draw [red,very thick,domain=270:310] plot ({cos(\x)}, {sin(\x)});	
			\draw [red,very thick,domain=30:70] plot ({cos(\x)}, {sin(\x)});
		\end{tikzpicture}
	\end{center}
	
	\caption{Optimal partition for $k+1=3$ and $n=3$} \label{Fig.2}
\end{figure}
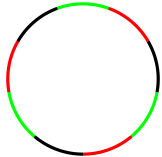

\subsection{Density} 
In Ramsey theory it frequently happens that a result on colourings of a~structure 
admits a generalisation to dense substructures. A well-known example where this 
phenomenon occurs is van der Waerden's theorem~\cite{vdW}, which states that if for 
a sufficiently large natural number $n=n(k, r)$ we colour the set $\{1, \dots, n\}$ 
with $r$ colours, then some $k$-term arithmetic progression will be monochromatic. 
The corresponding density result is a celebrated theorem of Szemer\'edi~\cite{Sz}
asserting that if $n=n(k, \delta)$ is sufficiently large, then every 
set $A\subseteq \{1, \dots, n\}$ of size $|A|\ge \delta n$ contains a $k$-term 
arithmetic progression. Clearly the case $\delta=1/r$ of Szemer\'edi's 
theorem implies van der Waerden's theorem, while, conversely, no `easy' way of 
deriving Szemer\'edi's theorem from van der Waerden's theorem is known. 
In fact, one obstacle for such a reverse deduction has recently been analysed 
in the PhD thesis of M.~Sales (see also~\cite{marcelo}).

This state of affairs indicates that one should at least wonder whether there 
is a~strengthening of Theorem~\ref{thm:1} that bounds the $k+1$ 
summands $\Phi(A_i)$ individually in terms of the densities $\xi_i=\lambda(A_i)/L$.
As it turns out, this can indeed be achieved. 

\begin{theorem}\label{thm:main}
	If $L\ge 1$, the set $A\subseteq \RR/L\ZZ$ is measurable, 
	and $\lambda(A)=\xi L$, then
		\[
		\Phi(A)\ge (\xi+\sqrt{2\xi^2-2\xi+1}-1)L\,.
	\]
	\end{theorem}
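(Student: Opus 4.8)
The plan is to reformulate $\Phi$ in a more workable shape, reduce the problem to periodic unions of intervals, and then finish with an elementary one–variable optimisation. Throughout put $\beta=\beta(\xi)=\sqrt{2\xi^2-2\xi+1}=\sqrt{\xi^2+(1-\xi)^2}$, so that the claim reads $\Phi(A)\ge(\xi+\beta-1)L$; note $\beta\in[\tfrac{1}{\sqrt2},1)$ for $\xi\in(0,1)$.

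\emph{Reformulation.} Writing $g(t)=\lambda\bigl(A\cap(A-t)\bigr)$ for the autocorrelation of $A$ (a function on $\RR/L\ZZ$), Fubini gives the two identities
\[
\Phi(A)=\int_0^1 g(t)\,\dd t=\int_A\lambda\bigl(A\cap[x,x+1]\bigr)\,\dd x\,.
\]
Here $g$ is continuous and even, $g(0)=\xi L$, and $\int_0^L g=\xi^2L^2$; moreover the inclusion $A\setminus(A-s-t)\subseteq\bigl(A\setminus(A-s)\bigr)\cup\bigl((A-s)\setminus(A-s-t)\bigr)$ shows that $h:=\xi L-g$ is \emph{subadditive}, i.e.\ $h(s+t)\le h(s)+h(t)$. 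I would record all of this, and also pin down the expected extremiser (the single–colour analogue of Figure~\ref{Fig.2}): a cyclic union of $N$ equal intervals of length $\xi p$ separated by gaps of length $(1-\xi)p$, with period $p=1/\beta\in[1,\sqrt2]$. Since $\beta p=1<p$, in that configuration the window $[x,x+1]$ meets only two cyclically consecutive intervals, so only neighbouring intervals interact, and one checks directly that $\Phi=(\xi+\beta-1)L$ there.

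\emph{Reduction to periodic unions of intervals.} Since $|\Phi(A)-\Phi(A')|\le 4\lambda(A\triangle A')$ and finite unions of intervals are dense in the measure algebra, it suffices to treat $A=I_1\dcup\dots\dcup I_N$, a cyclic family of intervals separated by $N$ gaps, with $\sum_j\lambda(I_j)=\xi L$ fixed. The engine is a smoothing argument based on the first variation: replacing $\mathbf 1_A$ by $\mathbf 1_A+\varepsilon\eta$ with $\int\eta=0$ gives $\tfrac{\dd}{\dd\varepsilon}\big|_{0}\Phi=\int_{\RR/L\ZZ}\eta(x)\,\lambda\bigl(A\cap[x-1,x+1]\bigr)\,\dd x$, so a transfer of measure from a locally crowded region (large $\lambda(A\cap[x-1,x+1])$) to a locally sparse one decreases $\Phi$. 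Using the explicit piecewise–quadratic form of the pairwise interactions $\iint_{I_i\times I_j}\mathbf 1_{\{0\le y-x\le 1 \bmod L\}}\,\dd x\,\dd y$ (convex in the intervening gap length in the relevant parameter range) together with convexity of the self–interaction $\tfrac12\lambda(I_j)^2$ for $\lambda(I_j)\le 1$, one shows that suitable local moves — equalising two adjacent gaps, equalising two interval lengths, merging or deleting degenerate pieces — never increase $\Phi$ and eventually drive $A$ to an exactly periodic union of intervals of period $p=L/N$ for some integer $N$. Hence $\Phi(A)\ge\min_{N\in\NN}(L/N)\,C(L/N)\ge\inf_{p>0}(L/p)\,C(p)$, where $C(p)$ is the per–period value of the period–$p$ configuration. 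Making all of this rigorous — in particular controlling intervals longer than $1$, the simultaneous effect of several cross terms during a single exchange, and the narrow range $1\le L<\sqrt2$ in which the period is essentially forced — is the step I expect to be the main obstacle.

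\emph{The one–variable optimisation, and Theorem~\ref{thm:1}.} It remains to compute $(L/p)\,C(p)$ and minimise over $p>0$. A direct computation, or equivalently summing the Fourier series $\Phi(A_p)=\xi^2L+\tfrac{Lp}{\pi^3}\sum_{m\ge1}m^{-3}\sin^2(\pi m\xi)\sin(2\pi m/p)$ for the period–$p$ configuration $A_p$, gives a piecewise–smooth function of $p$ whose minimum is attained at $p=1/\beta$ with value $(\xi+\beta-1)L$; this proves Theorem~\ref{thm:main}. Finally, Theorem~\ref{thm:1} follows by applying Theorem~\ref{thm:main} to each colour class $A_i$ with density $\xi_i=\lambda(A_i)/L$: the function $\xi\mapsto\xi+\sqrt{2\xi^2-2\xi+1}-1=\xi+\sqrt{2(\xi-\tfrac12)^2+\tfrac12}-1$ is convex, so by Jensen and $\sum_i\xi_i=1$ one gets $\sum_i\Phi(A_i)\ge(k+1)\bigl(\tfrac{1}{k+1}+\tfrac{\sqrt{k^2+1}}{k+1}-1\bigr)L=(\sqrt{k^2+1}-k)L$.
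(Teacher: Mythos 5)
Your reduction to finite unions of intervals and your closing Jensen/Minkowski step are fine, but the core of the argument --- the claim that ``local moves (equalising two adjacent gaps, equalising two interval lengths, merging or deleting degenerate pieces) never increase $\Phi$ and eventually drive $A$ to an exactly periodic union of intervals of period $L/N$'' --- is exactly the part you have not proved, and it is not a routine verification. First, the asserted convexity of the cross terms in the gap length is not available in the generality you need: the interaction $\lambda\bigl\{(x,y)\in I_i\times I_j\colon 0\le y-x\le 1\bigr\}$, viewed as a function of the separation, is an integrated trapezoidal correlation and has both convex and concave regimes; moreover a single shift of one interval changes two gaps at once and, when several gaps fit inside the unit window, also the interactions with non-neighbouring intervals, so the sign of the variation is not controlled by a two-term convexity argument. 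Second, even granting that each move is non-increasing, you have no argument that the process terminates in an \emph{exactly} periodic configuration; indeed, when $\sqrt{2\xi^2-2\xi+1}\,L\notin\NN$ the minimiser among unions of at most $N$ intervals need not be periodic at all, so the chain $\Phi(A)\ge\min_{N}(L/N)\,C(L/N)$ cannot simply be reached by smoothing, and the narrow range $1\le L<\sqrt2$ that you flag is only one of the problems. You correctly identify this step as ``the main obstacle,'' but that obstacle is the theorem.

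For comparison, the paper does not symmetrise at all. It fixes the minimal number $n$ of intervals admitting a counterexample, uses a crude bound (Lemma~\ref{lemma:2}) to confine $L$ to a compact range $[1,12n+1]$, and minimises $\eta(A)=\Phi(A)-(\xi+W-1)L$ over the compact parameter space of endpoint vectors (Proposition~\ref{prop:32}). At a minimising configuration, first-order variations of single endpoints (and of $L$ itself) yield the stationarity conditions $g_A(a_i)=1+\frac{2\xi-1}{W}$ at every endpoint and $g_A(x)\ge\frac{\xi}{W}$ on $A$; these, fed into a purely deterministic lemma about $1$-Lipschitz functions (Lemma~\ref{lemma:3} via Lemma~\ref{lemma:4}), force $\eta(A)\ge0$ interval by interval, with no need to identify the extremal configuration or to compare against periodic competitors. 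If you want to salvage your outline, you would need either a genuine rearrangement inequality implying that periodic configurations minimise $\Phi$ for fixed $N$ and $\xi$ (which you have not supplied and which is doubtful as stated), or to replace the smoothing step by a variational/compactness argument of the paper's type.
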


Note that equality holds if $n=\sqrt{2\xi^2-2\xi+1}L$ is an integer and~$A$ consists 
of $n$ equally spread intervals, each of length 
$\frac{\xi L}n$ (see Figure~\ref{Fig.1}). In particular, for every $\xi\in[0,1]$ the factor $\xi + \sqrt{2\xi^2-2\xi+1}-1$ in Theorem~\ref{thm:main} is optimal. 

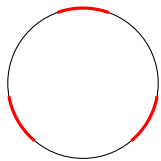
\begin{figure}[h]
\begin{center}
	\begin{tikzpicture}	
	\draw (0,0)circle (1cm);	
	\draw [red,very thick,domain=70:110] plot ({cos(\x)}, {sin(\x)});	
	\draw [red,very thick,domain=190:230] plot ({cos(\x)}, {sin(\x)});	
	\draw [red,very thick,domain=310:350] plot ({cos(\x)}, {sin(\x)});	
	\end{tikzpicture}
\end{center}
\caption{Three equidistant intervals of length $\frac{\xi L}{3}$.} 
\label{Fig.1}
\end{figure}

We conclude this introduction by showing that the density version of our 
result implies the colouring version. 

\begin{proof}[Proof of Theorem~\ref{thm:1} assuming Theorem~\ref{thm:main}]
	Let $\xi_1,\dots, \xi_{k+1}$ be the numbers defined in such a way that $\lambda(A_i)=\xi_i L$ holds for every $i$. Then $\xi_i$'s sum up to $1$ and, therefore, Theorem~\ref{thm:main} yields
		\[
		\sum_{i=1}^{k+1}\Phi(A_i) 
		\geq 
		\sum_{i=1}^{k+1} (\xi_i+\sqrt{\xi_i^2+(1-\xi_i)^2}-1)L
		=
		\biggl(\sum_{i=1}^{k+1}\sqrt{\xi_i^2+(1-\xi_i)^2}\biggr)L - kL\,.
	\]
		Minkowski's inequality implies 
		\[
		\sum_{i=1}^{k+1}\sqrt{\xi_i^2+(1-\xi_i)^2}
		\ge
		\sqrt{\biggl(\sum_{i=1}^{k+1}\xi_i\biggr)^2 + 
			\biggl(\sum_{i=1}^{k+1}(1-\xi_i)\biggr)^2} 
		=  
		\sqrt{k^2+1}\,,
	\]
	and thus we have indeed $\sum_{i=1}^{k+1}\Phi(A_i)\ge (\sqrt{k^2+1}-k)L$.
\end{proof}

\subsection*{Organisation} We prove Theorem~\ref{thm:main} in the next 
two sections and conclude by offering some remarks on the discrete problem 
in~Section~\ref{sec:conc}.

\section{Preliminaries}

Let $W\colon [0, 1]\lra \RR$ be the function defined by 
\[
	W(\xi) 
	=
	\sqrt{2\xi^2-2\xi+1} 
	= 
	\sqrt{\xi^2+(1-\xi)^2}
\]
for every $\xi\in [0, 1]$. When $\xi$ is clear from the 
context, we will often write $W$ instead of~$W(\xi)$. 
Obviously, this function is symmetric in the sense that 
$W(1-\xi)=W(\xi)$ holds for all $\xi\in [0, 1]$. In the 
left half of the unit interval we will sometimes use the following 
estimate. 

\begin{fact}\label{f:21}
	If $\xi\in [0, \frac12]$, then $2W\le 2-\xi$.
\end{fact}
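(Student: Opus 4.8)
The plan is to reduce the claim to an elementary quadratic inequality by squaring. First I would note that both sides are nonnegative: since $\xi\in[0,\tfrac12]$ we have $2-\xi\ge\tfrac32>0$, while $2W=2\sqrt{2\xi^2-2\xi+1}\ge 0$ trivially. Hence $2W\le 2-\xi$ is equivalent to $4W^2\le(2-\xi)^2$. Expanding both sides, $4W^2=8\xi^2-8\xi+4$ and $(2-\xi)^2=\xi^2-4\xi+4$, so this inequality rearranges to $7\xi^2-4\xi\le 0$, that is, $\xi(7\xi-4)\le 0$.

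It then only remains to observe that $\xi(7\xi-4)\le 0$ holds precisely for $0\le\xi\le\tfrac47$, and since $\tfrac12<\tfrac47$ this covers the whole interval $[0,\tfrac12]$, completing the proof. I do not expect any genuine obstacle here; the only points deserving a word are the verification that $2-\xi>0$ before squaring and the remark that $[0,\tfrac12]$ lies strictly inside the range $[0,\tfrac47]$ of validity, so the estimate $2W\le 2-\xi$ is not sharp at the endpoint $\xi=\tfrac12$.
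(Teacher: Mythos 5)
Your argument is correct and is essentially the paper's proof: both rest on the identity $4W^2=(2-\xi)^2+\xi(7\xi-4)$ together with the observation that $7\xi-4<0$ on $[0,\tfrac12]$, plus the positivity of $2-\xi$ to justify taking square roots. The only difference is presentational (you square and rearrange, the paper states the identity directly), so nothing further is needed.
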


\begin{proof}
	Due to $7\xi < 4$ this follows from
	$4W^2 = 8\xi^2-8\xi+4=(2-\xi)^2 + \xi(7\xi-4)$.
\end{proof}

For the rest of this and the next section we fix a real number $L\ge 1$
and we write   
\[
	S = \RR/L\ZZ
\]
for the circle with circumference $L$. For every measurable set $A\subseteq S$ 
we define the function $f_A\colon S\lra [0, 1]$ 
by $f_A(x)=\lambda([x, x+1]\cap A)$. In view of Fubini's theorem, the 
quantity~$\Phi(A)$ defined in the introduction rewrites as 
\[
	\Phi(A)=\int_A f_A(x)\,\dd x\,.
\]
If $A$ has measure $\lambda(A)=\xi L$, then Theorem~\ref{thm:main} states 
that the difference 
\[
	\eta(A)=\Phi(A)-(\xi+W-1)L
\]
is nonnegative. Our first lemma reduces this claim to the case $\xi\le \frac12$.

\begin{lemma}\label{lemma:1}
	If $S=A\dcup B$ is a partition of $S$ into two measurable 
	sets, then $\eta(A)=\eta(B)$.
\end{lemma}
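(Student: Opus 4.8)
The plan is to compute $\eta(A)-\eta(B)$ directly and check that it vanishes. Since $\lambda(A)=\xi L$ forces $\lambda(B)=(1-\xi)L$, and since $W(1-\xi)=W(\xi)=W$ by the symmetry of $W$ recorded above, the definition of $\eta$ gives $\eta(B)=\Phi(B)-\bigl((1-\xi)+W-1\bigr)L=\Phi(B)-(W-\xi)L$, whence
\[
\eta(A)-\eta(B)=\Phi(A)-\Phi(B)+(1-2\xi)L .
\]
So everything reduces to the identity $\Phi(A)-\Phi(B)=(2\xi-1)L$, that is, $\Phi(A)-\Phi(B)=\lambda(A)-\lambda(B)$.

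To get this I would use two elementary facts about the functions $f_A,f_B\colon S\lra[0,1]$, both of which rely on $L\ge 1$. First, for every $x\in S$ the arc $[x,x+1]$ has length $1\le L$, so $\lambda([x,x+1])=1$; as $A\dcup B=S$ this yields the pointwise identity $f_A(x)+f_B(x)=1$ on all of $S$. Second, Fubini's theorem gives $\int_S f_A(x)\,\dd x=\lambda(A)$: writing $f_A(x)=\int_S\mathbf{1}_A(y)\,\mathbf{1}[x\le y\le x+1]\,\dd y$ and swapping the order of integration, the inner integral over $x$ equals $\lambda(\{x\in S:y-1\le x\le y\})=1$ for each fixed $y$, again because $L\ge 1$.

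Combining these two facts with the formula $\Phi(A)=\int_A f_A$ from the preliminaries, I would compute
\[
\Phi(B)=\int_B f_B=\int_B(1-f_A)=\lambda(B)-\int_B f_A=\lambda(B)-\Bigl(\int_S f_A-\int_A f_A\Bigr)=\lambda(B)-\lambda(A)+\Phi(A),
\]
so that $\Phi(A)-\Phi(B)=\lambda(A)-\lambda(B)=(2\xi-1)L$, and therefore $\eta(A)=\eta(B)$, as desired.

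There is no serious obstacle here; the only point requiring care is the bookkeeping with the wrap-around on the circle and the two uses of $L\ge 1$. Indeed this is precisely why the statement is cleaner on $S=\RR/L\ZZ$ than on an interval, where both $f_A+f_B=1$ and $\int f_A=\lambda(A)$ would hold only up to boundary error terms. The quantity genuinely linking $\Phi(A)$ and $\Phi(B)$ is the ``cross term'' $\int_B f_A=\lambda(A)-\Phi(A)$, and keeping track of it is the whole content of the lemma.
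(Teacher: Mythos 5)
Your proof is correct and follows essentially the same route as the paper: Fubini's theorem giving $\int_S f_A(x)\,\dd x=\xi L$, the pointwise identity $f_A+f_B=1$, and the symmetry $W(1-\xi)=W(\xi)$ to reduce the claim to $\Phi(A)-\Phi(B)=(2\xi-1)L$. The algebra is merely rearranged (you isolate the cross term $\int_B f_A$ explicitly), so there is nothing to add.
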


\begin{proof}
Suppose $\lambda(A)=\xi L$ for some $\xi\in[0,1]$. By Fubini's theorem we have
\begin{equation}\label{eq:1}
	\int_S f_A(x) \, \dd x = \xi L\,.
\end{equation}
Indeed, both sides of the above equation are equal to the measure of the set
\[
	\{(x,y)\in S\times A\colon x\leq y \leq x+1\}\,.
\]
Next, since $f_A(x)+f_B(x)=1$ holds for all $x\in S$, we also know that
\[
	\int_B f_A(x)\, \dd x + \int_B f_B(x) \, \dd x 
	= 
	(1-\xi)L\,.
\]
By subtracting this from equation~\eqref{eq:1}, we infer that
\begin{align*}
	\Phi(A)-\Phi(B) 
	& = 
	\int_A f_A(x) \, \dd x - \int_B f_B(x) \, \dd x 
	= 
	\int_S f_A(x) \, \dd x - \int_B f_A(x) \, \dd x - \int_B f_B(x) \, \dd x \\
	& = 
	(2\xi-1)L 
	= 
	(\xi + W -1)L - (W-\xi)L\,,
\end{align*}
whence $\eta(A)=\eta(B)$.
\end{proof}

For a given measurable set $A\subseteq S$ we can render the definition of $\Phi(A)$ more symmetrical by considering the function $g_A\colon S\longrightarrow [0,2]$
defined by $g_A(x)=f_A(x)+f_A(x-1)$ for every~$x\in S$. 
In the special case $L\ge 2$ this rewrites as 
\[
	g_A(x) 
	= 
	\lambda([x-1,x+1]\cap A)\,,
\]
whereas for $L\in [1, 2]$ one needs to read this equation with the understanding 
that the interval $[x-1, x+1]$ wraps around the circle $S$ so that some parts of 
$A$ might be ``taken into account twice''. More precisely, if $\pi\colon\RR\lra S$ denotes 
the canonical projection $x\longmapsto x+L\ZZ$, then  
\[
	g_A(x) 
	= 
	\lambda\bigl([x'-1,x'+1]\cap \pi^{-1}[A]\bigr)\,,
\]
where on the right hand side $x'\in \RR$ denotes an arbitrary representative 
of $x$, and $\lambda$ refers to the Lebesgue measure on $\RR$. We will 
sometimes benefit from the obvious fact that for every interval $I\subseteq S$
and every $x\in I$ we have 
\begin{equation}\label{eq:gix}
	g_I(x)\ge \min\{\lambda(I), 1\}\,.
\end{equation}

In general, $\Phi(A)$ can be obtained from $g_A$ in the following way.

\begin{lemma}
	For every measurable set $A\subseteq S$ we have
		\[
		\int_A g_A(x) \,\dd x 
		= 
		2\Phi(A)\,.
	\]	
	\end{lemma}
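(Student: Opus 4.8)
The plan is to prove the identity $\int_A g_A(x)\,\dd x = 2\Phi(A)$ by the same Fubini-style argument that already appeared in the proof of Lemma~\ref{lemma:1}, exploiting the fact that $g_A$ splits as $g_A(x)=f_A(x)+f_A(x-1)$ and that each of the two resulting integrals is a measure of a slanted strip inside $A\times A$. First I would recall that $\Phi(A)=\int_A f_A(x)\,\dd x$ is, by Fubini, the $\lambda$-measure (in the plane, or rather on $S\times S$ with its product measure) of the region $T^+=\{(x,y)\in A\times A\colon x\le y\le x+1\}$, where ``$x\le y\le x+1$'' is interpreted on the circle in the usual way, i.e. as $y\in[x,x+1]$ modulo $L$. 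Then I would split $\int_A g_A(x)\,\dd x$ into $\int_A f_A(x)\,\dd x + \int_A f_A(x-1)\,\dd x$; the first term is $\Phi(A)$ by definition, so the whole point reduces to showing $\int_A f_A(x-1)\,\dd x = \Phi(A)$ as well.

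For that second integral I would again invoke Fubini: $\int_A f_A(x-1)\,\dd x = \int_A \lambda([x-1,x]\cap A)\,\dd x$ is the product measure of the region $T^-=\{(x,y)\in A\times A\colon x-1\le y\le x\}$, i.e. the region where $y\in[x-1,x]$ on $S$. Now a change of variables, namely swapping the two coordinates (or equivalently reflecting $T^-$ across the diagonal), identifies $T^-$ with $\{(y,x)\in A\times A\colon x\le y\le x+1\}=T^+$ up to relabeling, and since the product measure on $S\times S$ is invariant under this swap we get $\lambda(T^-)=\lambda(T^+)=\Phi(A)$. Hence $\int_A g_A(x)\,\dd x = \Phi(A)+\Phi(A)=2\Phi(A)$, as claimed.

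The only point that needs a little care — and the place I would expect a referee to look — is making the circular interpretation of the inequalities ``$x\le y\le x+1$'' and ``$x-1\le y\le x$'' precise enough that the coordinate swap really is a measure-preserving bijection between the two regions. The clean way is to lift everything to $\RR$ via the projection $\pi\colon\RR\to S$, exactly as was done in the excerpt when defining $g_A$ through $\pi^{-1}[A]$: work with $\wh A=\pi^{-1}[A]$, note $f_A(\pi(x))=\lambda([x,x+1]\cap\wh A)$ for any representative $x\in\RR$, and observe that the map $(x,y)\mapsto(y,x)$ on $\RR\times\RR$ sends $\{(x,y)\colon x\le y\le x+1\}$ to $\{(x,y)\colon x-1\le y\le x\}$ while preserving planar Lebesgue measure; pushing forward under $\pi\times\pi$ on a fundamental domain then yields the desired equality of measures on $S\times S$. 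Once this bookkeeping is set up, there is no real computation left: the result is just ``Fubini twice plus symmetry of the diagonal strip''.
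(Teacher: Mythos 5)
Your argument is correct and is essentially the paper's own proof: both reduce the claim to showing $\int_A f_A(x-1)\,\dd x=\Phi(A)$ via Fubini and then identify the region $\{(x,y)\in A^2\colon x-1\le y\le x\}$ with $\{(x,y)\in A^2\colon x\le y\le x+1\}$ by the measure-preserving swap of coordinates. Your extra remarks on lifting to $\RR$ to make the circular inequalities precise are just bookkeeping on top of the same idea.
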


\begin{proof}
	It suffices to show 
		\[ 
		\Phi(A)
		= 
		\int_A f_A(x-1) \, \dd x\,.
	\]
		Here the right side is equal to the measure of the set 
	\[ 
		\bigl\{(x,y)\in A^2\colon x-1\leq y\leq x\bigr\} 
		= 
		\bigl\{(x,y)\in A^2\colon y\leq x\leq y+1\bigr\}\,,
	\]
		which, by symmetry, coincides with $\Phi(A)$.
\end{proof}	

Preparing a compactness argument we will now treat the special 
case of Theorem~\ref{thm:main}, where the set $A$ is a union of 
finitely many intervals and, depending on the number of these 
intervals, the perimeter $L$ of the circle $S$ is sufficiently 
large. The precise bound $12n$ obtained here will be irrelevant 
in the future --- it only matters that some such number, depending 
only on $n$, exists.
   
\begin{lemma}\label{lemma:2}
	Let $n$ be a positive integer. If $A\subseteq S$ is a union 
	of $n$ mutually disjoint intervals and $L\ge 12n$, 
	then $\eta(A)\ge 0$.
\end{lemma}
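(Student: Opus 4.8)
The plan is to reduce the statement to the behaviour of a single interval and then close the estimate with the Cauchy--Schwarz inequality; the hypothesis $L\ge 12n$ enters only at the very last step, and with a good deal of room to spare.

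First I would normalise so that $\lambda(A)\le\tfrac12 L$. If instead $\lambda(A)>\tfrac12 L$, then $B=S\setminus A$ is a union of at most $n$ pairwise disjoint intervals with $\lambda(B)<\tfrac12 L$, and $\eta(A)=\eta(B)$ by Lemma~\ref{lemma:1}, so it suffices to prove $\eta(B)\ge 0$; since the argument that follows treats the case $\lambda\le\tfrac12 L$ directly, this introduces no circularity. So assume $\lambda(A)=\xi L$ with $\xi\le\tfrac12$ and write $A=I_1\dcup\dots\dcup I_m$ with $m\le n$ and $\lambda(I_i)=\ell_i>0$; thus $\sum_i\ell_i=\xi L$ and, because $L\ge 12$, each $\ell_i\le\tfrac12 L\le L-1$. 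I would next note that it is enough to prove the apparently weaker bound $\Phi(A)\ge\xi^2 L$: writing $s=\xi(1-\xi)\in[0,\tfrac14]$ one has $W^2=1-2s\le(1-s)^2$, hence $W\le 1-s$ and therefore $\xi+W-1\le\xi-s=\xi^2$, so that $\Phi(A)\ge\xi^2 L$ already yields $\eta(A)\ge 0$.

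To get $\Phi(A)\ge\xi^2 L=\tfrac1L\bigl(\sum_i\ell_i\bigr)^2$ I would discard the contributions coming from pairs lying in two different intervals: since $I_i\subseteq A$ we have $f_A(x)\ge f_{I_i}(x)$ for $x\in I_i$, so summing over $i$ gives $\Phi(A)\ge\sum_{i=1}^m\Phi(I_i)$. Because $\ell_i\le L-1$, the arc $I_i$ together with a unit window does not wrap all the way around $S$, and an elementary computation gives $\Phi(I_i)=\tfrac12\ell_i^2$ if $\ell_i\le 1$ and $\Phi(I_i)=\ell_i-\tfrac12$ if $1\le\ell_i\le L-1$; in particular $\Phi(I_i)\ge\ell_i-\tfrac12$ always and $\Phi(I_i)=\tfrac12\ell_i^2$ whenever $\ell_i\le 1$. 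Now set $a=\sum_i\ell_i$ and split into two cases. If $a\ge m$, then $\Phi(A)\ge\sum_i(\ell_i-\tfrac12)=a-\tfrac m2\ge\tfrac a2\ge\tfrac{a^2}{L}$, where the last step uses $a=\xi L\le\tfrac12 L$. If $a<m$, put $P=\{i:\ell_i>1\}$, $a_1=\sum_{i\in P}\ell_i$, $a_2=a-a_1$; not all intervals can lie in $P$ (otherwise $a>m$), so there are $m-|P|\ge 1$ short intervals, and using $\ell_i-\tfrac12\ge\tfrac12\ell_i$ for $\ell_i\ge 1$ together with Cauchy--Schwarz on the short arcs we obtain $\Phi(A)\ge\tfrac12 a_1+\tfrac{a_2^2}{2(m-|P|)}\ge\tfrac12 a_1+\tfrac{a_2^2}{2m}$. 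Since $a_1,a_2<m$ one has $(a_1+a_2)^2=a_1^2+2a_1a_2+a_2^2\le 3ma_1+a_2^2\le 6ma_1+6a_2^2$, which rearranges to $\tfrac12 a_1+\tfrac{a_2^2}{2m}\ge\tfrac{a^2}{12m}\ge\tfrac{a^2}{L}$, the last inequality being $L\ge 12n\ge 12m$. Either way $\Phi(A)\ge\tfrac{a^2}{L}=\xi^2 L$, finishing the proof.

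There is no deep obstacle here: the whole argument is a chain of reductions ending in elementary one-variable estimates. The two points that do require a little care are checking that the passage to the complement is legitimate and non-circular (this uses that the complement of $n$ arcs is again a union of at most $n$ arcs, and that $B$ falls into the already-settled regime $\lambda\le\tfrac12 L$), and choosing the threshold $m$ in the case split so that the Cauchy--Schwarz bound closes comfortably against $L\ge 12n$.
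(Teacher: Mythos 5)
Your proof is correct. It shares the paper's skeleton---use Lemma~\ref{lemma:1} to pass to the complement and assume $\xi\le\frac12$, discard all interaction between distinct intervals, and use the room provided by $L\ge 12n$ to reduce everything to a quadratic inequality in $\xi$---but the core estimate is executed along a genuinely different route. The paper splits the intervals at the density-dependent threshold $2\xi$, throws the short ones away entirely (their total length is at most $2\xi n\le\frac16\xi L$), bounds $g_A(x)\ge 2\xi$ pointwise on the long ones via~\eqref{eq:gix}, and thus obtains $\Phi(A)\ge\frac56\xi^2L$, which it closes with the stronger algebraic inequality $\xi^2\ge\frac65(W+\xi-1)$. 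You instead split at the absolute threshold $1$, use the exact one-interval values $\Phi(I)=\frac12\ell^2$ for $\ell\le1$ and $\Phi(I)=\ell-\frac12$ for $1\le\ell\le L-1$, and recover (rather than discard) the contribution of the short intervals by Cauchy--Schwarz, which is where $L\ge 12n$ enters; since this geometric bound carries no factor $\frac56$ loss, the weaker and tidier inequality $\xi+W-1\le\xi^2$, i.e.\ $W\le 1-\xi(1-\xi)$, suffices to finish (and it even holds for all $\xi\in[0,1]$). Both arguments are elementary and of comparable length: yours buys exact single-interval computations, a clean intermediate target $\Phi(A)\ge\xi^2L$, and a case $a\ge m$ that does not use $L\ge12n$ at all, while the paper's buys a shorter case analysis because everything reduces to one pointwise bound on $g_A$. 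The only points that a careful write-up of your version should state explicitly are the ones you already flag: the complement of $n$ arcs is again a union of at most $n$ arcs (up to a null set, which affects neither $\lambda$ nor $\Phi$), and $\ell_i\le\xi L\le\frac L2\le L-1$ guarantees that the unit window attached to a single interval never wraps back onto that same interval, so the exact formula for $\Phi(I_i)$ is legitimate.
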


\begin{proof}
	Since $S\setminus A$ is a union of $n$ disjoint intervals 
	as well, Lemma~\ref{lemma:1} allows us to suppose 
	that $\lambda(A)=\xi L$ holds for some $\xi\in [0,\frac12]$. 
	Let $A=I_1\dcup I_2\dcup\dots\dcup I_n$, where for every 
	$i\in [n]$ the set $I_i$ is an interval of length $\alpha_i$.
	Distinguishing between `long' and `short' intervals, we consider 
	the partition $[n]=P\dcup Q$ defined by 
		\[
		P=\bigl\{i\in[n]\colon \alpha_i \ge 2\xi\bigr\} 
		\quad \text{ and } \quad 
		Q=\bigl\{i\in[n]\colon \alpha_i < 2\xi\bigr\}\,.
	\]
		The total length of the short intervals can be bounded by
	$\sum_{i\in Q}\alpha_i\le 2\xi|Q|\le 2\xi n\le \frac16\xi L$,
	and thus we have
		\[ 
		\sum_{i\in P}\alpha_i\ge \frac56 \xi L\,.
	\]
		Combined with the fact that for all $i\in P$ and $x\in I_i$ 
	equation~\eqref{eq:gix} yields	
		\[
		g_A(x)
		\ge 
		g_{I_i}(x) 
		\ge 
		\min\bigl\{1,\alpha_i\bigr\} 
		\ge 
		2\xi\,,
	\] 
		this reveals 
		\begin{equation}\label{eq:1147}
		\Phi(A) 
		\ge
		\frac12 \sum_{i\in P}\int_{I_i}g_A(x) \,\dd x
		\ge 
		\xi \sum_{i\in P}\alpha_i
		\ge 
		\frac56\xi^2 L\,.
	\end{equation}
		
	Since $\xi\in[0,\frac12]$ implies
		\begin{align*}	
		W+1-\xi 
		&= 
		\sqrt{2\xi^2-2\xi+1} + (1-\xi) 
		= 
		\sqrt{2\left(\xi-\frac12\right)^2 + \frac12} + (1-\xi) \\
		& \ge 
		\sqrt{\frac12} + \frac12 
		> 
		\frac7{10} + \frac12 = \frac65\,,
	\end{align*}
		we have 
		\[
		\xi^2
		=
		W^2-(1-\xi)^2
		=
		(W+1-\xi)(W+\xi-1)
		\ge 
		\frac65(W+\xi-1)\,.
	\]
		Therefore,~\eqref{eq:1147} leads to the desired estimate
	$\Phi(A)\geq (W+\xi-1)L$.
\end{proof}

Recall that a real function $h$ defined on an interval is said to 
be $1$-\emph{Lipschitz} 
if
\[
	|h(x)-h(y)| \le |x-y|
\]
holds for all $x$ and $y$ in the domain of $h$. Clearly, if a function $h$ 
has this property, then $x\longmapsto h(x)+x$ is nondecreasing and, therefore,
measurable. This shows that all $1$-Lipschitz functions are measurable. 

\begin{lemma}\label{lemma:3}
	If $\xi\in(0,\frac12]$, $\alpha\geq 0$, and $h\colon [0,\alpha]\lra \RR$ 
	is a $1$-Lipschitz function satisfying
	\begin{enumerate}[label=\rmlabel]
		\item\label{it:24i} $h(0)=h(\alpha)=1+\frac{2\xi-1}{W}$,
		\item\label{it:24ii} and 
			$h(x)\geq \max\bigl\{\frac{\xi}{W},\alpha\bigr\}$ 
			for all $x\in[0,\alpha]$, 
	\end{enumerate}
	then
	\[
		\int_0^{\alpha} h(x)\, \dd x
		\geq 
		\frac{2\alpha(W+\xi-1)}{\xi}\,.
	\]
\end{lemma}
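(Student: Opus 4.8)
The plan is to replace $h$ by an explicit extremal lower envelope, reduce the assertion to an elementary integral, and then verify the resulting inequality by a short case analysis resting on Fact~\ref{f:21} and on two algebraic identities. Throughout I would write $D=W+\xi-1$ and $c=1+\tfrac{2\xi-1}{W}=\tfrac{W+2\xi-1}{W}$, so that hypothesis~\ref{it:24i} reads $h(0)=h(\alpha)=c$ and the target becomes $\int_0^{\alpha}h(x)\,\dd x\ge\tfrac{2\alpha D}{\xi}$. The preliminary facts I would record are: $W\ge 1-\xi$ (from $W^2-(1-\xi)^2=\xi^2$), hence $D\ge 0$, with $D>0$ once $\xi>0$; the identity $D^2+\xi^2=2DW$ (equivalently $2DW-\xi^2=D^2$) together with the bookkeeping relations $cW=D+\xi$ and $c-\tfrac{\xi}{W}=\tfrac{D}{W}$; the inequality $c\ge\xi/W$ (again equivalent to $W\ge 1-\xi$); the bound $\alpha\le c$, obtained by evaluating hypothesis~\ref{it:24ii} at $x=0$; and finally that Fact~\ref{f:21}, i.e.\ $2W\le 2-\xi$, is the same as $2D\le\xi$ and also gives $\tfrac{\xi}{W}\ge\tfrac{2c}{3}$.

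Next I would pass to the envelope. Since $h$ is $1$-Lipschitz with $h(0)=h(\alpha)=c$ and $h\ge m:=\max\{\xi/W,\alpha\}$ (and $m\le c$ by the above), we have $h(x)\ge\phi(x):=\max\{c-\min\{x,\alpha-x\},\,m\}$ on $[0,\alpha]$, so it suffices to show $\int_0^{\alpha}\phi(x)\,\dd x\ge\tfrac{2\alpha D}{\xi}$; the case $\alpha=0$ being trivial, assume $\alpha>0$. An elementary integration gives
\[
	\int_0^{\alpha}\phi(x)\,\dd x
	=
	\begin{cases}
		c\alpha-\tfrac14\alpha^2 & \text{if } \alpha\le 2(c-m),\\
		(c-m)^2+m\alpha & \text{if } \alpha\ge 2(c-m).
	\end{cases}
\]

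The case analysis would split on the value of $m$. If $\alpha\ge\xi/W$, so $m=\alpha$, then $\tfrac{\xi}{W}\ge\tfrac{2c}{3}$ forces $\alpha\ge\tfrac{2c}{3}$, i.e.\ $2(c-\alpha)\le\alpha$, whence $\int_0^{\alpha}\phi=(c-\alpha)^2+\alpha^2$; I would study the convex quadratic $g(t)=(c-t)^2+t^2-\tfrac{2tD}{\xi}$, use $c-\xi/W=D/W$ and $D^2+\xi^2=2DW$ to get $g(\xi/W)=0$, and use $cW=D+\xi$ and $W\ge 1-\xi$ to see that the vertex of $g$ lies at or below $\xi/W$, so that $g$ is nondecreasing on $[\xi/W,c]\ni\alpha$ and $g(\alpha)\ge g(\xi/W)=0$. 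If instead $\alpha<\xi/W$, so $m=\xi/W$ and $2(c-m)=2D/W$, I would split again: for $\alpha\le 2D/W$ one has $\int_0^{\alpha}\phi=c\alpha-\tfrac14\alpha^2$, and dividing by $\alpha$ and using $\tfrac14\alpha\le\tfrac{D}{2W}=\tfrac12(c-\xi/W)$ reduces the claim to $\xi cW+\xi^2\ge 4DW$, which via $cW=D+\xi$ and $4DW=2D^2+2\xi^2$ becomes $0\ge D(2D-\xi)$, true by Fact~\ref{f:21}; for $2D/W\le\alpha<\xi/W$ one has $\int_0^{\alpha}\phi=(D/W)^2+(\xi/W)\alpha$, and the identity $2DW-\xi^2=D^2$ turns the required inequality into $\tfrac{\xi}{W}\ge\alpha$, which is the case hypothesis. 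Since $\alpha\le c$, these subcases exhaust the range of~$\alpha$.

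The step I expect to be the main obstacle is not any single computation but arranging that they all line up: hypothesis~\ref{it:24ii} alone only gives $\int_0^{\alpha}h\ge\alpha m$, which is too weak, so hypothesis~\ref{it:24i} must enter essentially, yet the quadratic $g$ genuinely dips below $0$ near its vertex. The argument therefore works only because one can pin down both the exact window $[\xi/W,c]$ that $\alpha$ occupies (using $\alpha\le c$ and $\tfrac{\xi}{W}\ge\tfrac{2c}{3}$) and the location of the vertex of $g$ relative to it, and because $D^2+\xi^2=2DW$ makes the bound tight precisely at $\alpha=\xi/W$, the interval length in the extremal configuration of equally spread intervals. Verifying those algebraic coincidences (each reducing, after squaring, to an inequality like $7\xi<4$ or $W\ge 1-\xi$) is where the real care is needed.
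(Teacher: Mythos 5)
Your proposal is correct and takes essentially the same route as the paper: the same piecewise-linear lower envelope forced by the Lipschitz condition and the floor $\max\{\xi/W,\alpha\}$, and the same three ranges of $\alpha$ split at $2D/W=2\bigl(1-\tfrac{1-\xi}{W}\bigr)$ and $\xi/W$, with only cosmetic differences in the bookkeeping identities. The sole variation is the case $\alpha\ge\xi/W$, where you argue via the root $g(\xi/W)=0$ and the vertex location of the convex quadratic $g$ (the vertex bound indeed reduces to $\xi\le\tfrac12$), while the paper completes the square around $\xi/W$ and bounds the remaining terms --- the two computations are equivalent.
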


\begin{proof}
Due to
\[
	(W+1)(W+\xi-1) 
	= 
	W^2 + W\xi + (\xi-1) 
	= 
	\xi(2\xi-1+W)\,,
\]
the quantity  
\[
	\phi 
	= 
	1 + \frac{2\xi-1}{W}
\]
appearing in~\ref{it:24i} factorises as
\begin{equation}\label{eq:3}
	\phi = \frac{(W+1)(W+\xi-1)}{W\xi}\,.
\end{equation}
Together with $W=\sqrt{\xi^2+(1-\xi)^2}>1-\xi$, this shows 
$\phi>0$. Depending on $\alpha$ we shall distinguish three 
cases. In all of them, we exploit that our assumptions on $h$ 
lead to a pointwise lower bound $\bar{h}\le h$, which is piecewise 
linear (see Figure~\ref{fig:25}). 
Thus, the integration $\int_0^\alpha \bar{h}(x)\,\dd x$
can easily be carried out and it will only remain to establish a 
linear or quadratic inequality in $\alpha$. 

\begin{figure}[ht]
	\centering
	
	\begin{subfigure}[b]{0.32\textwidth}
		\centering
	\begin{tikzpicture}	\draw[->] (-0.5,0) -- (3.5,0) coordinate (x axis);
	\draw[->] (0,-0.3) -- (0,3.5) coordinate (y axis);
	\draw (1.6,-0.05) -- (1.6,0.05) (3,-0.05) -- (3,0.05) (-0.05,3) -- (0.05,3);
	
	\draw[dashed] (1.6,0) -- (1.6,3);
	\draw[thick, domain=0:0.8, smooth, variable=\z, blue] plot ({\z}, {3-\z});
	\draw[thick, domain=0.8:1.6, smooth, variable=\z, blue] plot ({\z}, {1.4+\z});
	
	\node[below] at (1.6,0) {$\alpha$};
	\node[below] at (3,0) {$\phi$};
	\node[left] at (0,3) {$\phi$};
	\node[below] at (3.5,0) {$x$};
	\node[left] at (0,3.5) {$\bar{h}(x)$};
			
	\end{tikzpicture}
		\caption{$\alpha\le 2\bigl(1-\frac{1-\xi}{W}\bigr)$}
		\label{fig:25a}
	
\end{subfigure}
\hfill    
\begin{subfigure}[b]{0.32\textwidth}
	\centering
		\begin{tikzpicture}	
			
			\draw[->] (-0.5,0) -- (3.5,0) coordinate (x axis);
			\draw[->] (0,-0.3) -- (0,3.5) coordinate (y axis);
			\draw (2.2,-0.05) -- (2.2,0.05) (3,-0.05) -- (3,0.05) (-0.05,2.5) -- (0.05,2.5) (-0.05,3) -- (0.05,3);
			
			\draw[dashed] (2.2,0) -- (2.2,3);
			\draw[thick, domain=0:0.5, smooth, variable=\z, blue] plot ({\z}, {3-\z});
			\draw[thick, domain=0.5:1.7, smooth, variable=\z, blue] plot ({\z}, {2.5});
			\draw[thick, domain=1.7:2.2, smooth, variable=\z, blue] plot ({\z}, {0.8+\z});
			
			\node[below] at (2.2,0) {$\alpha$};
			\node[below] at (3,0) {$\phi$};
			\node[left] at (0,2.5) {$\frac{\xi}{W}$};
			\node[left] at (0,3) {$\phi$};
			\node[below] at (3.5,0) {$x$};
			\node[left] at (0,3.5) {$\bar{h}(x)$};
			
		\end{tikzpicture}
		\caption{$ 2\bigl(1-\frac{1-\xi}{W}\bigr)\leq\alpha\leq \frac{\xi}{W}$}
		\label{fig:25b}
	\end{subfigure}
\hfill    
\begin{subfigure}[b]{0.32\textwidth}
	\centering
		\begin{tikzpicture}
			\draw[->] (-0.5,0) -- (3.5,0) coordinate (x axis);
			\draw[->] (0,-0.3) -- (0,3.5) coordinate (y axis);
			\draw (2.7,-0.05) -- (2.7,0.05) (3,-0.05) -- (3,0.05) (-0.05,2.7) -- (0.05,2.7) (-0.05,3) -- (0.05,3);
			
			\draw[dashed] (2.7,0) -- (2.7,3);
			\draw[thick, domain=0:0.3, smooth, variable=\z, blue] plot ({\z}, {3-\z});
			\draw[thick, domain=0.3:2.4, smooth, variable=\z, blue] plot ({\z}, {2.7});
			\draw[thick, domain=2.4:2.7, smooth, variable=\z, blue] plot ({\z}, {0.3+\z});
			
			\node[below] at (2.7,0) {$\alpha$};
			\node[below] at (3,0) {$\phi$};
			\node[left] at (0,2.7) {$\alpha$};
			\node[left] at (0,3) {$\phi$};
			\node[below] at (3.5,0) {$x$};
			\node[left] at (0,3.5) {$\bar{h}(x)$};
			
		\end{tikzpicture}
	
	\caption{$\frac{\xi}{W} \le\alpha$} 
	\label{fig:25c}
\end{subfigure}
\caption{The function $\bar{h}$.}
\label{fig:25}
\end{figure}
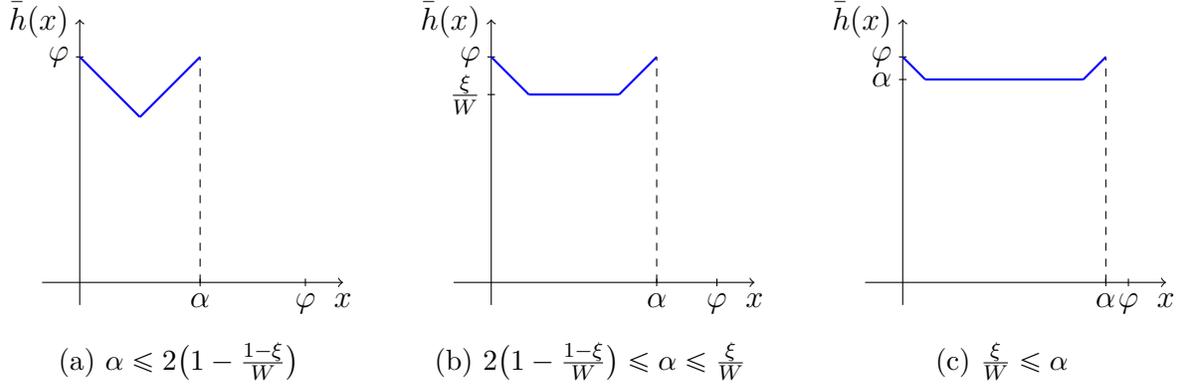

\smallskip
{\hskip2em \it First case: 
$\alpha \leq 2\bigl(1-\frac{1-\xi}{W}\bigr)$}

\smallskip

Since $h$ is $1$-Lipschitz,~\ref{it:24i} yields that $h\ge\bar{h}$, 
where
\[
\bar{h}(x)=\begin{cases}
	\phi-x & \text{ if }x\in [0,\alpha/2]\\
	\phi-(\alpha-x) & \text{ if } x\in [\alpha/2,\alpha]
\end{cases}
\]
(see Figure~\ref{fig:25a}).
Elementary geometric area considerations show 
\[ 
	\int_0^{\alpha}h(x)\,\dd x 
	\geq 
	\int_0^{\alpha}\bar{h}(x)\,\dd x 
	= 
	\alpha\phi - \frac{\alpha^2}4\,.
\]
As our upper bound on $\alpha$ and Fact~\ref{f:21} imply
\begin{align*}
	\phi - \frac{\alpha}4 
	& \overset{\eqref{eq:3}}{\ge} 
	\frac{(W+1)(W+\xi-1)}{W\xi} - \frac{W+\xi-1}{2W} 
	= 
	\frac{(2W+2-\xi)(W+\xi-1)}{2W\xi} \\
	& \overset{\phantom{\eqref{eq:3}}}{\ge}
	\frac{4W(W+\xi-1)}{2W\xi}
	= 
	\frac{2(W+\xi-1)}{\xi}\,,
\end{align*}
we thus have indeed
\[
	\int_0^{\alpha} h(x)\,\dd x 
	\ge 
	\frac{2\alpha(W+\xi-1)}{\xi}\,.
\]

\smallskip

{\hskip2em \it Second case: 
$2\left(1-\frac{1-\xi}{W}\right) \le \alpha \leq \frac{\xi}{W}$}

\smallskip

This time we use additionally that $h(x)\ge \frac\xi W$ holds for 
all $x\in [0, \alpha]$, which leads to the refined estimate 
$h(x)\geq \bar{h}(x)$, where
\[
\bar{h}(x)=
\begin{cases}
	\phi - x & \text{ if } x\in\bigl[0,1-\frac{1-\xi}{W}\bigr] \\
	\frac{\xi}{W} & \text{ if } x\in \bigl[1-\frac{1-\xi}{W},
		\alpha-\left(1-\frac{1-\xi}{W}\right)\bigr]	\\
	\phi - (\alpha-x) & \text{ if } x\in 
		\bigl[\alpha-\left(1-\frac{1-\xi}{W}\right),\alpha\bigr]
\end{cases}
\]
is visualised in Figure~\ref{fig:25b}. 
Decomposing the area below $\bar{h}$ into a rectangle and two 
right-angled isosceles triangles we obtain
\[
	\int_0^{\alpha} h(x)\,\dd x 
	\ge 
	\frac{\xi}{W}\alpha + \left(\phi-\frac{\xi}{W}\right)^2\,.
\]

Since the definitions of $\phi$ and $W$ entail
\begin{align*}	\left(\phi-\frac{\xi}{W}\right)^2 
	&=
	\left(1-\frac{1-\xi}{W}\right)^2 
	= 
	1 - \frac{2(1-\xi)}{W} + \frac{W^2-\xi^2}{W^2}   \\
	&= 
	\frac{2(W+\xi-1)}{W} - \frac{\xi^2}{W^2}
	= 
	\frac{\xi}{W}\left(\frac{2(W+\xi-1)}{\xi} - \frac{\xi}{W}\right)\,, \end{align*}
we are led again to 
\[
	\int_0^{\alpha} h(x)\, \dd x 
	\ge 
	\frac{\xi}{W}\alpha+
		\left(\frac{2(W+\xi-1)}{\xi} - \frac{\xi}{W}\right)\alpha
	=
	\frac{2\alpha(W+\xi-1)}{\xi}\,. 
\]

\smallskip

{\hskip2em \it Third case: $\alpha \geq \frac{\xi}{W}$}

\smallskip

We contend that 
\begin{equation}\label{eq:1842}
	\phi \ge \alpha\ge \frac23\phi\,.
\end{equation}
The first inequality follows from the fact that our assumptions
on $h$ imply $\phi=h(0)\ge \alpha$. Using Fact~\ref{f:21} we also 
obtain $3\alpha W\ge 3\xi=(2-\xi)+2(2\xi-1)\ge 2(W+2\xi-1)=2W\phi$,
which confirms the second estimate.  

Similar to the previous case we have $h(x)\geq \bar{h}(x)$
for all $x\in [0, \alpha]$, where
\[
\bar{h}(x)=
\begin{cases}
	\phi-x & \text{ if } x\in [0,\phi-\alpha]\\
	\alpha & \text{ if } x\in [\phi-\alpha,2\alpha-\phi]\\
	\phi-(\alpha-x) & \text{ if } x\in[2\alpha-\phi,\alpha]
\end{cases}
\]
is the function drawn in Figure~\ref{fig:25c}. By~\eqref{eq:1842}
all three of the above closed intervals are nonempty, whence
\begin{align*}
	\int_0^{\alpha} h(x)\,\dd x 
	&\ge 
	\int_0^{\alpha} \bar{h}(x)\,\dd x 
	=
	\alpha^2+(\phi-\alpha)^2 \\
	&=
	2\left(\alpha-\frac\xi W\right)^2
		+\left(\frac{4\xi}W-2\phi\right)\alpha
		+\left(\phi^2-\frac{2\xi^2}{W^2}\right)\,. 
\end{align*}

Owing to 
\[
	\frac{4\xi}W-2\phi
	=
	\frac{4\xi-2(W+2\xi-1)}{W}
	=
	\frac{2(1-W)}W
\]
and 
\begin{align*}
	\phi^2-\frac{2\xi^2}{W^2} 
	= 
	\frac{(1-2\xi)^2-2(1-2\xi)W+W^2-2\xi^2}{W^2} 	= 
	\frac{2(1-2\xi)(1-\xi-W)}{\xi W}\cdot \frac\xi W \le 0\,,
\end{align*}
we can conclude that
\[
	\int_0^{\alpha} h(x)\,\dd x 
	\ge 
	2\left(\frac{1-W}W+\frac{(1-2\xi)(1-\xi-W)}{W\xi}\right)
	\alpha\,.
\]
Finally, because of 
\begin{align*}
	\frac{1-W}W&+\frac{(1-2\xi)(1-\xi-W)}{W\xi}
	=
	\frac 1W-1+\frac{W^2-\xi-(1-2\xi)W}{W\xi} \\
	&=
	-1+\frac{W+2\xi-1}{\xi}
	=
	\frac{W+\xi-1}{\xi}\,,
\end{align*}
this simplifies to the desired inequality. 
\end{proof}		
	
\section{The proof}

Lemma~\ref{lemma:3} immediately yields the following special 
case of Theorem~\ref{thm:main}.

\begin{lemma}\label{lemma:4}	
Let $A\subseteq S$ be a union of intervals of measure 
$\lambda(A)=\xi L$, where $\xi\in(0,\frac12]$. If
\begin{enumerate}[label=\alabel]
	\item\label{it:31a} $g_A(x)=1+\frac{2\xi-1}{W}$ whenever $x$ 
		is a boundary point of~$A$,
	\item\label{it:31b} and $g_A(x)\ge \frac{\xi}{W}$ 
		for all $x\in A$, 
\end{enumerate}
then $\eta(A)\geq 0$.
\end{lemma}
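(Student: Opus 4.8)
The plan is to decompose $A$ into its finitely many maximal arcs, estimate $\int g_A$ over each one separately using Lemma~\ref{lemma:3}, and add the bounds. The first thing I would record is that $g_A$ is $1$-Lipschitz: this does not follow just from $g_A=f_A+f_A(\,\cdot\,-1)$ being a sum of two $1$-Lipschitz functions (that would give Lipschitz constant $2$), but rather from the formula $g_A(x)=\lambda([x-1,x+1]\cap A)$ recorded above (suitably interpreted when $L<2$), which exhibits $g_A$ as absolutely continuous with almost-everywhere derivative $\mathds{1}_A(x+1)-\mathds{1}_A(x-1)\in[-1,1]$.

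Next, write $A=I_1\dcup I_2\dcup\dots\dcup I_n$ with $I_j=[a_j,a_j+\alpha_j]$ the maximal arcs making up $A$; then each $a_j$ and each $a_j+\alpha_j$ is a boundary point of $A$, and $\sum_{j=1}^{n}\alpha_j=\lambda(A)=\xi L$. For each $j$ I would prove $\int_{I_j}g_A(x)\,\dd x\ge\tfrac{2\alpha_j(W+\xi-1)}{\xi}$ by distinguishing two cases. If $\alpha_j\le1$, I would apply Lemma~\ref{lemma:3} to $h_j(t)=g_A(a_j+t)$ on $[0,\alpha_j]$: this $h_j$ is $1$-Lipschitz; hypothesis~\ref{it:31a} gives $h_j(0)=h_j(\alpha_j)=1+\tfrac{2\xi-1}{W}$; and for $t\in[0,\alpha_j]$, hypothesis~\ref{it:31b} gives $h_j(t)\ge\tfrac{\xi}{W}$ while $g_A\ge g_{I_j}$ combined with~\eqref{eq:gix} gives $h_j(t)\ge\min\{\alpha_j,1\}=\alpha_j$, so that $h_j(t)\ge\max\{\tfrac{\xi}{W},\alpha_j\}$; Lemma~\ref{lemma:3} then yields exactly the desired bound. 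If instead $\alpha_j>1$, I would use $g_A(x)\ge g_{I_j}(x)\ge1$ for all $x\in I_j$ (by~\eqref{eq:gix} again) to get $\int_{I_j}g_A\ge\alpha_j$, and note that $\alpha_j\ge\tfrac{2\alpha_j(W+\xi-1)}{\xi}$ is equivalent to $2W\le2-\xi$, which is Fact~\ref{f:21}.

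It then remains to add these inequalities over $j$ and use the identity $\int_A g_A(x)\,\dd x=2\Phi(A)$, which gives $2\Phi(A)\ge\tfrac{2(W+\xi-1)}{\xi}\sum_j\alpha_j=2(W+\xi-1)L$, i.e.\ $\eta(A)=\Phi(A)-(\xi+W-1)L\ge0$. I expect the one point requiring a little care to be the scope of Lemma~\ref{lemma:3}: the pointwise lower bound $h\ge\alpha$ that it demands is supplied by~\eqref{eq:gix} precisely in the range $\alpha\le1$, so arcs longer than $1$ have to be dealt with by the separate elementary estimate above, where Fact~\ref{f:21} is exactly what makes the target constant $\tfrac{2(W+\xi-1)}{\xi}$ at most $1$.
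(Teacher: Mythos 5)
Your proposal is correct and follows essentially the same route as the paper's proof: decompose $A$ into its constituent arcs, apply Lemma~\ref{lemma:3} to $g_A$ restricted to each arc of length at most $1$ (with~\ref{it:31a},~\ref{it:31b} and~\eqref{eq:gix} supplying the hypotheses), handle longer arcs directly via $g_A\ge 1$ and Fact~\ref{f:21}, and sum using $\int_A g_A=2\Phi(A)$. The only cosmetic difference is your justification of the $1$-Lipschitz property via the almost-everywhere derivative $\mathds{1}_A(x+1)-\mathds{1}_A(x-1)$, where the paper instead telescopes directly to get $|g_A(x)-g_A(y)|=|\lambda([x-1,y-1]\cap A)-\lambda([x+1,y+1]\cap A)|\le y-x$; both are valid.
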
	

\begin{proof}
	Write $A=I_1 \dcup I_2 \dcup \dots \dcup I_n$, where for every 
	$i\in[n]$ the set $I_i$ is an interval of length~$\alpha_i$. 
	We shall prove
		\begin{equation}\label{eq:5}
		\int_{I_i}g_A(x)\,\dd x 
		\ge 
		\frac{2(W+\xi-1)}{\xi}\alpha_i	
	\end{equation}		
		for every $i\in [n]$. The addition of these estimates will yield
		\[
		2\Phi(A) 
		= 
		\int_A g_A(x)\,\dd x 
		\ge 
		\frac{2(W+\xi-1)}{\xi} \sum_{i=1}^n \alpha_i 
		= 
		2(W+\xi-1)L\,, 
	\]
		which in turn implies $\eta(A)\ge 0$.

	For the verification of~\eqref{eq:5} we can assume, 
	by rotational symmetry, that~$I_i=[0,\alpha_i]$. 
	If~$\alpha_i \geq 1$, then~\eqref{eq:gix} yields $g_A(x) \ge 1$ for 
	every $x\in I_i$, so that Fact~\ref{f:21} implies
		\[ 
		\int_{I_i}g_A(x) \,\dd x 
		\ge 
		\alpha_i 
		\ge 
		\frac{\xi+2W-(2-\xi)}{\xi}\alpha_i 
		= 
		\frac{2(W+\xi-1)}{\xi}\alpha_i\,.
	\]
		
	From now on we can suppose $\alpha_i\le 1$, which leads to
	$g_A(x)\ge \alpha_i$ for every $x\in I_i$. Our plan is to show
	that the restriction $g_A\restriction {I_i}$ satisfies 
	the assumptions of Lemma~\ref{lemma:3}. As~\ref{it:24i} 
	and~\ref{it:24ii} follow from~\ref{it:31a} and~\ref{it:31b},
	it remains to prove that $g_A\restriction I_i$ is $1$-Lipschitz.
	Whenever $0\le x\le y\le \alpha_i$ we have indeed 
		\begin{align*}
		\big|g_A(x)-g_A(y)\big|
						&= 
		\big|\lambda([x-1,y-1]\cap A)-\lambda([x+1,y+1]\cap A)\big| \\
		&\le 
		\max\bigl\{\lambda([x-1,y-1]),\lambda([x+1,y+1])\bigr\} 
		= 
		y-x\,. \qedhere
	\end{align*}
	\end{proof}

Our next result removes the apparently quite restrictive 
assumptions~\ref{it:31a} and~\ref{it:31b} from the previous lemma.
 
\begin{prop}\label{prop:32}
	If $A\subseteq S$ is a union of intervals, then $\eta(A)\ge 0$.
\end{prop}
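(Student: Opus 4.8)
The plan is to reduce the general case — an arbitrary finite union of intervals — to the special case handled in Lemma~\ref{lemma:4} by a continuous deformation argument. By Lemma~\ref{lemma:1} we may assume $\lambda(A)=\xi L$ with $\xi\in(0,\tfrac12]$ (the cases $\xi=0$ being trivial). Write $A=I_1\dcup\dots\dcup I_n$. The idea is to move the endpoints of the constituent intervals — sliding them left or right, possibly merging intervals or letting them grow and shrink — along a one-parameter family $A_t$ of unions of at most $n$ intervals, all of measure $\xi L$, in such a way that $\eta(A_t)$ is nonincreasing in $t$, and so that at the end of the process the resulting set $A_\infty$ satisfies the two structural conditions~\ref{it:31a} and~\ref{it:31b} of Lemma~\ref{lemma:4}. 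Since $\eta(A_\infty)\ge 0$ by that lemma and $\eta(A)\ge\eta(A_\infty)$, we are done.

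To make this precise one works with the function $g_A$. Condition~\ref{it:31b} says $g_A\ge \xi/W$ everywhere on $A$; if some boundary point $x_0$ of $A$ violates the weaker requirement, i.e.\ $g_A(x_0)<\phi=1+\tfrac{2\xi-1}{W}$ or $g_A(x_0)<\xi/W$ at an interior point, then locally the "density seen" near $x_0$ is too small, and one should be able to decrease $\eta$ by transporting a small amount of mass from a region where $g_A$ is large into the neighbourhood of $x_0$. Concretely: choose a boundary point $x_0$ where $g_A$ is smallest among all boundary points, and a point $x_1$ in the interior of some $I_j$ where $g_A(x_1)$ is largest; if the small value is below the threshold $\phi$, grow $A$ slightly at $x_0$ and shrink it slightly at $x_1$, keeping $\lambda(A)$ fixed. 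One computes $\frac{d}{dt}\Phi(A_t)$ via the formula $2\Phi(A)=\int_A g_A$ — the derivative is (up to the factor coming from $\int_A\frac{d}{dt}g_{A_t}$, which vanishes by the measure constraint and Fubini, exactly as in~\eqref{eq:1}) essentially $g_A(x_0)-g_A(x_1)\le 0$, so $\eta$ does not increase. One runs this flow until either all boundary points have $g_A$-value exactly $\phi$ and all interior points of $A$ have $g_A\ge\xi/W$ — at which point Lemma~\ref{lemma:4} applies — or the combinatorial structure degenerates (two intervals collide, or an interval shrinks to zero), in which case $n$ drops and one restarts; since $n$ is finite this terminates.

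The main obstacle I expect is not the heuristic above but making the deformation argument rigorous and ensuring it genuinely terminates in the configuration required by Lemma~\ref{lemma:4}. Several subtleties need care: (i) the map $x\mapsto g_A(x)$ is continuous but only $1$-Lipschitz, so "the point where $g_A$ is largest/smallest" must be chosen measurably and the transport must be set up so that the relevant one-sided derivatives exist; (ii) one must check that growing $A$ at a low point of $g_A$ does not accidentally raise $g_A$ elsewhere above a level that breaks monotonicity of $\eta$ — but since enlarging $A$ only increases $g_A$ pointwise, and the constraint is a \emph{lower} bound on $g_A$, this actually works in our favour; (iii) the flow could in principle run forever without reaching condition~\ref{it:31a} if $g_A$ hovers just below $\phi$; to rule this out one should track a Lyapunov-type quantity such as $\min\{\,g_A(x):x\in\partial A\,\}$, which is nondecreasing under the flow and strictly increases while~\ref{it:31a} fails, combined with compactness of the space of $n$-interval configurations of fixed measure. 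An alternative, cleaner route that avoids the dynamical language altogether is a direct compactness/extremality argument: among all $n$-interval sets of measure $\xi L$ (for each $n\le$ the original count), a minimiser of $\eta$ exists by compactness; a minimiser must satisfy~\ref{it:31a} and~\ref{it:31b}, since otherwise the mass-transport perturbation above strictly decreases $\eta$, contradicting minimality; hence Lemma~\ref{lemma:4} bounds the minimum, giving $\eta(A)\ge 0$ for all such $A$. I would present the proof in this extremal form, as it localizes the one subtle computation (the sign of the first variation of $\Phi$) and sidesteps termination issues entirely.
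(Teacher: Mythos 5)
Your extremal reformulation is close in spirit to the paper's actual proof (which likewise takes a minimal counterexample, a compact parameter space, and first--variation arguments), but as written it has a genuine gap: with your constraints --- fixed circle $S$ and fixed measure $\lambda(A)=\xi L$ --- the perturbations you allow cannot produce the hypotheses of Lemma~\ref{lemma:4}. A measure-preserving transport that grows $A$ near one boundary point $x_0$ and shrinks it near another boundary point $x_1$ changes $\Phi$ by $\delta\bigl(g_A(x_0)-g_A(x_1)\bigr)+O(\delta^2)$, so at a minimiser you only learn that $g_A$ is \emph{constant} on the boundary of $A$; nothing in your setup identifies this constant (a Lagrange multiplier for the measure constraint) with the specific value $\phi=1+\frac{2\xi-1}{W}$ demanded by~\ref{it:31a}. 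Indeed, for generic fixed $(\xi,L)$, namely when $\sqrt{2\xi^2-2\xi+1}\,L$ is not an integer, the fixed-measure minimiser has $\eta>0$ and there is no reason for its boundary value of $g_A$ to equal $\phi$, so ``a minimiser must satisfy~\ref{it:31a}'' is simply not available. Condition~\ref{it:31b} fails for a similar reason: an interior point $x$ of $A$ cannot be ``grown'', and removing mass at $x$ while adding it at a boundary point $x_0$ gives, at a minimiser, the inequality $g_A(x)\le g_A(x_0)$ --- an upper bound, not the required lower bound $g_A(x)\ge\xi/W$ (and it also splits an interval, leaving your class of at most $n$ intervals).

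The paper closes exactly this gap by enlarging the family of competitors: both the circumference $L$ and the measure of $A$ are variables, the parameter space being the compact set of tuples $(a_1,\dots,a_{2n},L)$ with $1\le L\le 12n+1$, and Lemma~\ref{lemma:2} guarantees that no counterexample has $L\ge 12n$, so the minimiser of $\eta$ has room to move in the $L$-direction. Because $\eta$ already incorporates the benchmark $(\xi+W-1)L$, the extra degrees of freedom generate precisely the constants you are missing: moving a single endpoint changes $\lambda(A)$, the benchmark changes at rate $\phi$ per unit of added measure, and the two-sided first-order condition forces $g_A=\phi$ at every boundary point (Claim~\ref{claim:1}); inserting a short arc of new circle length inside $A$ at a point $x$ increases $L$, the benchmark changes at rate $\xi/W$, and one obtains $g_A(x)\ge\xi/W$ (Claim~\ref{claim:2}). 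Your computation of the first variation of $\Phi$ is fine as far as it goes, but without varying $\xi$ and $L$ the quantities $\phi$ and $\xi/W$ never enter, so the reduction to Lemma~\ref{lemma:4} does not go through. To repair the argument you would have to widen the variational family as the paper does (and then you need Lemma~\ref{lemma:2}, or a substitute, to regain compactness in the $L$-direction), or else prove a variant of Lemma~\ref{lemma:4} whose hypotheses match the Euler--Lagrange conditions of your fixed-measure problem; the latter is not in the paper and would require new work.
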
	

In the proof below it will be important to consider the perimeter $L$ 
of $S$ as one of the variables. Thus, the pair $(A, S)$ can be described 
by an $(2n+1)$-tuple of real numbers and we can study a potential ``worst 
counterexample'' by means of calculus. Roughly speaking, it turns out that 
such a configuration, if it existed, would need to satisfy the hypotheses of 
Lemma~\ref{lemma:4}, which is absurd.  

\begin{proof}[Proof of Proposition~\ref{prop:32}]
	Suppose that the proposition is not true and let $n$ denote the least integer    	such that there is a counterexample $(A, S)$, where $A$ is a union of $n$ 		
	intervals. 

	Consider the set
		\[ 
		\Delta 
		= 
		\bigl\{(a_1,a_2,\ldots,a_{2n},L)\in\RR^{2n+1}\colon
		0\le a_1\le a_2\le\dots\le a_{2n} \le L 
		\text{ and } 1\le L\le 12n+1\bigr\}\,.
	\]
		With every point $p=(a_1,a_2,\dots,a_{2n},L)\in\Delta$ we 
	associate the set $A^p\subseteq S=\RR/L\ZZ$ defined by
		\[
		A^p=[a_1,a_2)\dcup [a_3,a_4)\dcup\dots\dcup[a_{2n-1},a_{2n})\,.
	\]
		Our choice of $n$ and Lemma~\ref{lemma:2} ensure that there exists 
	some point $p\in \Delta$ such that $\eta(A^p)<0$. 
	Since $\Delta$ is compact and $p\longmapsto\eta(A^p)$ is a continuous 
	function from $\Delta$ to $\RR$, the extreme value theorem of Weierstra\ss\ 
	yields a point $p_\star\in\Delta$ such that $\eta_\star=\eta(A^{p_\star})$ 
	is minimal. By our indirect assumption $\eta_\star$ is negative.

	Let us write $p_\star=(a_1,a_2,\dots,a_{2n},L)$ and $A=A^{p_\star}$. 
	The complement $S\sm A$ corresponds to the point 
	$(a_2, \dots, a_{2n}, a_1, L)$ of $\Delta$, and in view of Lemma~\ref{lemma:1} 
	it satisfies $\eta(S\sm A)=\eta_\star$. 
	So if necessary we can still replace $A$ 
	by $S\sm A$, and for this reason we may suppose that the density $\xi$ 
	defined by $\lambda(A)=\xi L$ is in $\bigl[0, \frac12\bigr]$. 
	Due to $\eta(\varnothing)=0>\eta(A)$ we actually 
	have $\xi\in \bigl(0, \frac12\bigr]$.
	
	The minimality of $n$ implies that $A$ cannot be expressed as a union 
	of fewer than $n$ intervals, whence
		\begin{enumerate}
		\item[$\bullet$] $a_1<a_2<\dots<a_{2n}$,
	\end{enumerate}
		and if $a_1=0$, then $a_{2n}\neq L$. An appropriate rotation of the circle $S$
	allows us to strengthen the latter statement to
		\begin{enumerate}
		\item[$\bullet$] $0<a_1$ and $a_{2n}<L$.
	\end{enumerate}
		\begin{claim}\label{claim:1}
		For every $i\in[2n]$ we have $g_A(a_i)=1+\frac{2\xi-1}{W}$\,.
	\end{claim}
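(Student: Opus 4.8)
The plan is to analyse the minimal counterexample $(A,S)=p_\star$ via first-order optimality conditions, treating each coordinate $a_i$ of the point $p_\star\in\Delta$ as a free variable. Since we have arranged $0<a_1<a_2<\dots<a_{2n}<L$, each $a_i$ lies in the interior of its range, so if $\eta(A^p)$ were differentiable at $p_\star$ in the direction of $a_i$, the partial derivative would have to vanish. The first step is therefore to compute how $\eta(A^p)=\Phi(A^p)-(\xi+W(\xi)-1)L$ changes as we perturb a single endpoint $a_i$, keeping $L$ fixed. Moving $a_i$ by an infinitesimal amount $\dd a_i$ changes $\lambda(A)$ by $\pm\dd a_i$ (sign depending on parity of $i$), hence changes $\xi$ and the term $(\xi+W-1)L$ in a controlled way, and changes $\Phi(A)=\int_A f_A$ by an amount governed by the value of $g_A$ at the moving point, since adding (or removing) a sliver of width $\dd a_i$ near $a_i$ contributes $g_A(a_i)\,\dd a_i$ to $2\Phi(A)$ (this is exactly the symmetric bookkeeping the paper set up with $g_A(x)=f_A(x)+f_A(x-1)$ and $\int_A g_A=2\Phi(A)$).

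Carrying this out, the derivative of $2\Phi(A)$ with respect to the length of $A$ at a boundary point equals $g_A(a_i)$ up to the correct sign, while the derivative of $2(\xi+W-1)L$ with respect to $\lambda(A)$ is $2\bigl(1+\frac{\dd W}{\dd\xi}\bigr)$; a short computation gives $\frac{\dd W}{\dd\xi}=\frac{2\xi-1}{W}$, so this derivative equals $2\bigl(1+\frac{2\xi-1}{W}\bigr)$. Setting the total derivative to zero at the optimum $p_\star$ forces $g_A(a_i)=1+\frac{2\xi-1}{W}$ for every $i$, which is exactly the claim. One must be slightly careful with the sign conventions: perturbing $a_{2i-1}$ (a left endpoint) downward and $a_{2i}$ (a right endpoint) upward both enlarge $A$, and in both cases the relevant value of $g_A$ is taken at the endpoint in question; working through both cases yields the same equation $g_A(a_i)=\phi$ where $\phi=1+\frac{2\xi-1}{W}$.

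There is a subtlety that must be addressed before invoking stationarity: the function $p\mapsto\eta(A^p)$ need not be differentiable at every point, because $f_A$ (and hence $g_A$) is only $1$-Lipschitz, not smooth, and $\Phi$ is a quadratic-type functional in the endpoints. However, $\Phi(A^p)$ is a piecewise-polynomial (indeed piecewise-quadratic) function of the endpoints, with pieces determined by the combinatorial pattern of which gaps between consecutive $a_j$'s exceed $1$; within each piece it is smooth, and at $p_\star$ we may take one-sided derivatives in the $\pm a_i$ directions. Since $p_\star$ is a minimiser over the (open, in these coordinates) region, both one-sided derivatives in each coordinate direction must have the appropriate sign, and combined they pin down $g_A(a_i)$ to the single value $\phi$; alternatively one observes that $g_A$ is continuous and the left/right derivatives of $\Phi$ at $a_i$ both involve $g_A(a_i)$, so no non-differentiability actually occurs in the relevant direction. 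I expect this regularity bookkeeping — making precise exactly which derivative is being computed and why it exists — to be the main obstacle; the algebra identifying $\frac{\dd W}{\dd\xi}=\frac{2\xi-1}{W}$ and assembling the stationarity equation is routine.
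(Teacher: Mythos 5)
Your approach is essentially the paper's: perturb a single endpoint of the extremal configuration $p_\star$ (which is possible in both directions because $0<a_1<\dots<a_{2n}<L$), use the minimality of $\eta$ at $p_\star$, expand $W$ to first order via $\tfrac{\dd W}{\dd\xi}=\tfrac{2\xi-1}{W}$, and identify the first-order change of $\Phi$ with $g_A(a_i)$. The paper handles your differentiability worry exactly as you suggest in your ``alternatively'' remark: it never invokes piecewise smoothness, but proves directly that $\Phi(A^{\delta})-\Phi(A)=\delta g_A(a_1)+O(\delta^2)$ and then lets $\delta\to 0^{+}$ and $\delta\to 0^{-}$ in the resulting one-sided inequalities, which is the rigorous form of your two-sided stationarity argument.

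There is, however, a factor-of-two slip in your sliver bookkeeping which, taken literally, produces the wrong stationarity equation. A sliver $K$ of width $\delta$ attached at $a_i$ contributes $\delta g_A(a_i)+O(\delta^2)$ to $\Phi(A)$ itself, not to $2\Phi(A)$: writing $D$ for the region $x\le y\le x+1$, the sliver pairs with $A$ both as the left and as the right member of a pair, so
\[
\Phi(A\dcup K)-\Phi(A)
=
\lambda(A\times K\cap D)+\lambda(K\times A\cap D)+O(\delta^2)
=
\delta\bigl(f_A(a_i-1)+f_A(a_i)\bigr)+O(\delta^2)\,;
\]
equivalently, in the symmetric bookkeeping the change of $2\Phi=\int_A g_A\,\dd x$ is $2\delta g_A(a_i)+O(\delta^2)$, because both $\int_K g_A\,\dd x$ and $\int_A g_K\,\dd x$ appear. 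With the derivatives as you state them --- $g_A(a_i)$ for $2\Phi$ against $2\bigl(1+\frac{2\xi-1}{W}\bigr)$ for $2(\xi+W-1)L$ --- setting the total derivative to zero would force $g_A(a_i)=2\bigl(1+\frac{2\xi-1}{W}\bigr)$, which contradicts the claim you are proving. Once this factor is corrected the computation matches the paper's equation $\Phi(A^{\delta})-\Phi(A)=\delta g_A(a_i)+O(\delta^2)$ and the argument goes through as intended.
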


	\begin{proof}
		For reasons of symmetry it suffices to display the 
		argument for $i=1$. 
		Given a real number $\delta$ with
				\[
			|\delta|<\min\{a_1,a_2-a_1\}
		\]
				we set
				\[
			p(\delta)=(a_1-\delta,a_2,\dots,a_{2n},L) 
			\quad \text{ and } \quad A^{\delta}=A^{p(\delta)}\,.
		\]
				Notice that $p(0)=p_\star$ and $A^0=A$. 
		The minimality of $\eta_\star$ yields 
		$\eta_\star\le\eta(A^{\delta})$, whence
				\[
			\Phi(A^{\delta})-\Phi(A) 
			\geq 
			(\xi'+W'-1)L-(\xi+W-1)L
			=
			(\xi'-\xi+W'-W)L
		\]	
				holds for
				\[
			\xi'
			=
			\frac{\lambda(A^{\delta})}{L}
			=
			\frac{\xi L+\delta}{L} 
			= 
			\xi + \frac{\delta}{L}
			\quad \text{ and } \quad 
			W'=W(\xi')\,.
		\]
		 		Taylor's formula shows
				\[
			W' 
			= 
			\sqrt{2(\xi')^2-2\xi'+1} 
			= 
			W+\frac{2\xi-1}{W}(\xi'-\xi)+O\left((\xi'-\xi)^2\right),
		\]
				and thus we obtain
				\[
			\Phi(A^{\delta})-\Phi(A) 
			\geq
			(\xi'-\xi+W'-W)L
			=
			\delta\left(1+\frac{2\xi-1}{W}\right)+O(\delta^2)\,.
		\]
				Together with the equation
				\begin{equation}\label{eq:1959}
			\Phi(A^{\delta})-\Phi(A) 
			=
			\delta g_A(a_1)+O(\delta^2)\,,
		\end{equation}
				which we shall prove below, this will yield 
		$\delta\bigl(g_A(a_1)-1-\frac{2\xi-1}W\bigr)\ge O(\delta^2)$,
		and our claim follows by taking $\delta\to 0^-$ and $\delta\to 0^+$. 

		Now it remains to confirm~\eqref{eq:1959}. We will 
		only deal with the case $\delta>0$ here, the case $\delta<0$ 
		being similar. Setting 
				\[
			D=\{(x, y)\in S^2\colon x\le y\le x+1\}
			\quad \text{ and } \quad 
			K=[a_1-\delta, a_1)\,,
		\]
				we have $A^\delta=K\dcup A$, wherefore 
				\begin{align*}
			\Phi(A^{\delta})-\Phi(A)
			&=
			\lambda\bigl((K\dcup A)^2\cap D\bigr)-\lambda(A^2\cap D) \\
			&=
			\lambda(A\times K\cap D)+\lambda(K\times A\cap D)
			+\lambda(K^2\cap D)\,.
		\end{align*}
				Due to 
				\begin{align*}
			\lambda(A\times K\cap D)
			&=
			\delta\lambda(A\cap [a_1-1, a_1])+O(\delta^2)\,, \\
			\lambda(K\times A\cap D)
			&=
			\delta\lambda(A\cap [a_1, a_1+1])+O(\delta^2)\,, \\
			\text{ and } \quad
			\lambda(K^2\cap D)&=O(\delta^2)\,,
		\end{align*}
				this establishes~\eqref{eq:1959}.
	\end{proof}	

	\begin{claim}\label{claim:2}
		For all $x\in A$ we have $g_A(x)\geq\frac{\xi}{W}$.
	\end{claim}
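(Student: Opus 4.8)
The plan is to argue by contradiction, exploiting that $(A,S)=p_\star$ minimises $\eta$ on $\Delta$. Suppose $g_A(x_0)<\tfrac{\xi}{W}$ for some $x_0\in A$. By Claim~\ref{claim:1} we have $g_A=\phi:=1+\tfrac{2\xi-1}{W}$ at every boundary point of $A$, and $\phi-\tfrac{\xi}{W}=\tfrac{W+\xi-1}{W}>0$ since $W>1-\xi$, so $x_0$ cannot be a boundary point and must lie in the interior of one interval $I=[a_{2j-1},a_{2j}]$, of length $\alpha$ say. By~\eqref{eq:gix}, $\alpha\le g_I(x_0)\le g_A(x_0)<\tfrac{\xi}{W}$; in particular $\alpha<1$, so $g_I\equiv\alpha$ on a neighbourhood of $I$, hence $g_A=\alpha+g_{A\setminus I}$ there, and a Fubini computation gives $\Phi(A)=\tfrac12\alpha^2+\Phi(A\setminus I)+\int_I g_{A\setminus I}$. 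Moreover $g_A'=\mathds 1_A(\,\cdot+1)-\mathds 1_A(\,\cdot-1)$ a.e., so the slopes of $g_A$ lie in $\{-1,0,1\}$; as $g_A$ drops from $\phi$ at $a_{2j-1}$ to below $\tfrac{\xi}{W}$ at $x_0$ and rises back to $\phi$ at $a_{2j}$, the point $x_0$ has distance $>\tfrac{W+\xi-1}{W}$ from each endpoint, so $\tfrac{2(W+\xi-1)}{W}<\alpha<\tfrac{\xi}{W}$ — a window which is non-empty precisely because $2W\le 2-\xi$ (Fact~\ref{f:21}, strict for $\xi>0$). In particular $g_{A\setminus I}(a_{2j-1})=g_{A\setminus I}(a_{2j})=\phi-\alpha>0$.

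With this structure in hand I would first dispose of the case in which $A\setminus I$ admits a sufficiently long gap (which happens, e.g., whenever the two gaps of $A$ flanking $I$ have total length at least $2$, since together with $I$ they merge into one long gap of $A\setminus I$): relocate $I$ rigidly into the middle of that gap, obtaining a set $A'\in\Delta$ with $\lambda(A')=\lambda(A)$, so that $\eta(A')-\eta(A)=\int_{I'}g_{A\setminus I}-\int_I g_{A\setminus I}$; the right-hand integral is positive (it is bounded below by the tent estimate forced by $g_{A\setminus I}\ge 0$ and the boundary values $\phi-\alpha$), while the left-hand integral is made to vanish — or at least to be strictly smaller — by placing $I'$ far from $A\setminus I$, contradicting the minimality of $\eta_\star$. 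In the complementary case $I$ is squeezed between close neighbours, and here I would run an infinitesimal second-order perturbation of the two endpoints of $I$, one order above Claim~\ref{claim:1}: moving $a_{2j-1}\mapsto a_{2j-1}-s$ and $a_{2j}\mapsto a_{2j}+t$, a Fubini expansion as in~\eqref{eq:1959} produces first-order terms in $\Phi$ that cancel against those of $(\xi+W-1)L$ (this is Claim~\ref{claim:1} again), while the Hessian of $\eta$ at $p_\star$ has entries built from $\tfrac{1}{LW^3}$ (recall $W''=W^{-3}$) and the one-sided slopes of $g_A$ at the endpoints of $I$, both in $\{-1,0,1\}$; combining positive semidefiniteness of this Hessian with the analogous inequalities obtained by rigidly translating $I$, and with the bathtub shape of $g_A$ on the short interval $I$ and on the short gaps beside it, I expect to reach a contradiction.

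The hard part is exactly this last case: converting the qualitative picture — $g_A$ dips below $\tfrac{\xi}{W}$ on a short interval hemmed in by short gaps — into the sharp numerical inequalities on the endpoint slopes of $g_A$ (and on $\alpha$) that are incompatible with $p_\star$ being a minimiser, and organising the borderline ranges of $\alpha$, around $\tfrac{2(W+\xi-1)}{W}$, around $\tfrac23\phi$, and around $\tfrac{\xi}{W}$, into a short list of elementary estimates, each closed out, as throughout the paper, via Fact~\ref{f:21} and the factorisations $(W+1)(W+\xi-1)=\xi(2\xi-1+W)$ and $W''=W^{-3}$ already used in Lemma~\ref{lemma:3} and Claim~\ref{claim:1}. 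Once $g_A\ge\tfrac{\xi}{W}$ on $A$ is established, Claims~\ref{claim:1} and~\ref{claim:2} together show that $A$ satisfies the hypotheses of Lemma~\ref{lemma:4}, so $\eta(A)\ge 0$, contradicting $\eta_\star<0$ and completing the proof of Proposition~\ref{prop:32}.
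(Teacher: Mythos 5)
Your argument has a genuine gap: the case you yourself call ``the hard part'' is never closed. After the (correct) preliminary observations --- that a point $x_0$ with $g_A(x_0)<\tfrac{\xi}{W}$ must lie in the interior of a single short interval $I$ whose length $\alpha$ satisfies $\tfrac{2(W+\xi-1)}{W}<\alpha<\tfrac{\xi}{W}$, and that $I$ can be relocated whenever $A\setminus I$ has a gap of length at least $\alpha+2$ --- everything hinges on the complementary case, and there you only announce a second-order perturbation of the endpoints of $I$ whose Hessian analysis is ``expected'' to yield a contradiction. No inequality incompatible with the minimality of $\eta_\star$ is actually derived, and it is not clear that the sketched mix of endpoint perturbations, rigid translations, Fact~\ref{f:21} and the factorisation identities can be organised into a proof at all; moreover the relocation case is the non-generic one (an optimiser may well have all gaps much shorter than $1$), so the burden falls precisely on the unfinished part.

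The paper's proof avoids all of this by exploiting the one degree of freedom your perturbations never touch: the perimeter $L$ is itself a coordinate of $\Delta$. After renumbering one may assume $x\in[a_{2n-1},a_{2n})$, and one considers $p(\delta)=(a_1,\dots,a_{2n-1},a_{2n}+\delta,L+\delta)$, i.e.\ the circle is lengthened by $\delta$ and the inserted arc is declared to belong to the set. Since the stretch between $x$ and $a_{2n}$ lies entirely inside $A$, this configuration coincides, up to rotation, with the one obtained by inserting the new arc at the point $x$ itself; a first-order computation in the spirit of~\eqref{eq:1959}, carried out after rotating by $L-x$, gives $\Phi(A^{\delta})-\Phi(A)\le\delta g_A(x)+O(\delta^2)$, while the benchmark $(\xi+W-1)L$ increases by $\tfrac{\xi}{W}\delta+O(\delta^2)$. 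Because $L\le 12n$ (Lemma~\ref{lemma:2}, as $A$ is a counterexample), the point $p(\delta)$ stays in $\Delta$, so the minimality of $\eta_\star$ forces $g_A(x)\ge\tfrac{\xi}{W}$ for every $x\in A$ at once --- one uniform first-order argument, with no case analysis and no second-order terms. To salvage your route you would have to carry out the Hessian analysis in full; the insertion trick is the far shorter path.
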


\begin{proof}
	It suffices to show $g_A(x)\geq\frac{\xi}{W}$ whenever 
	$x\in [a_{2n-1}, a_{2n})$. For every $\delta\in[0,1]$ we set
		\[ 
		p(\delta)=(a_1,a_2,\dots,a_{2n-1},a_{2n}+\delta,L+\delta) 
		\quad \text{ and } \quad 
		A^{\delta}=A^{p(\delta)}\,.
	\]
		Due to $L\le 12n$ we have $p(\delta)\in\Delta$ and, therefore, 
	$\eta(A^{\delta})\geq \eta(A)$.
	This means that the numbers 
		\[
		\xi'=\frac{\lambda(A^{\delta})}{L+\delta} 
		\quad \text{ and } \quad 
		W'=W(\xi') 
	\]
		satisfy
		\[ 
		\Phi(A^{\delta})-\Phi(A)
		\geq 
		(\xi'+W'-1)(L+\delta) - (\xi+W-1)L\,. 
	\]
		In order to simplify the right side we observe that
		\[
		\xi' 
		= 
		\frac{\xi L + \delta}{L+\delta} 
		= 
		\xi+\frac{1-\xi}{L}\delta + O(\delta^2)
	\]
		and
		\[ 
		W' 
		= 
		W + \frac{2\xi-1}{W}(\xi'-\xi)+O((\xi'-\xi)^2) 
		= 
		W + \frac{(2\xi-1)(1-\xi)}{LW}\delta+O(\delta^2)
	\]
		lead to
		\begin{align*}
		& \phantom{==} (\xi'+W'-1)(L+\delta) - (\xi+W-1)L \\
		& = 
		(\xi'-\xi+W'-W)(L+\delta) + (\xi+W-1)\delta	\\
		& = 
		\left(\left(1+\frac{2\xi-1}{W}\right)(1-\xi)+	
			(\xi+W-1)\right)\delta+O(\delta^2) \\
		& = 
		\left(\frac{(2\xi-1)(1-\xi)}{W}+\frac{W^2}{W}\right)\delta+O(\delta^2) \\
		& = 
		\frac{\xi}{W}\delta + O(\delta^2)\,.
	\end{align*}
	For this reason it suffices to show 
		\begin{equation}\label{eq:1419}
		\Phi(A^{\delta})-\Phi(A)
		\le
		\delta g_A(x)+O(\delta^2)\,.
	\end{equation}
		The direct comparison of $A^\delta$ and $A$ becomes more transparent if we 
	rotate both sets by $L-x$ in their respective circles. That is, we look instead 
	at the sets 
		\begin{align*}
		B
		&=
		[0, a_{2n}-x)\dcup \bigdcup_{1\le i<n} [a_{2i-1}+L-x, a_{2i}+L-x)
			\dcup [a_{2n-1}+L-x, L) \\
		\text{ and } \quad 
		B^\delta
		&=
		[0, a_{2n}-x)\dcup \bigdcup_{1\le i<n} [a_{2i-1}+L-x, a_{2i}+L-x)
			\dcup [a_{2n-1}+L-x, L+\delta)\,,
	\end{align*}
		which satisfy $\Phi(B)=\Phi(A)$ as well as $\Phi(B^\delta)=\Phi(A^\delta)$.
	Lifting the whole situation to $\RR^2$ we thus obtain 
		\[
		\Phi(A)=\lambda\bigl(B^2\cap (D_1\dcup D_2)\bigr)\,,
	\]
		where 
		\begin{align}
		D_1&=\{(x, y)\in [0, L)^2\colon x\le y\le x+1\} \label{eq:3D1} \\
		\text{ and } \quad 
		D_2&=\{(x, y)\in [L-1, L)\times [0, 1]\colon y\le x+1-L\} \label{eq:3D2} \,,
	\end{align}
	and, similarly, 
		\[
		\Phi(A^\delta)=\lambda\bigl((B^\delta)^2\cap (D^\delta_1\dcup 
			D^\delta_2)\bigr)\,,
	\]
		where 
		\begin{align*}
		D^\delta_1&=\{(x, y)\in [0, L+\delta)^2\colon x\le y\le x+1\} \\
		\text{ and } \quad 
		D^\delta_2
		&=
		\{(x, y)\in [L+\delta-1, L+\delta)\times [0, 1]\colon y\le x+1-L-\delta\}\,.
	\end{align*}
		The fact that $B^\delta$ consists of $B$ and the interval $[L, L+\delta)$
	implies 
		\begin{align}\label{eq:1446}
		\lambda\bigl((B^\delta)^2\cap D^\delta_1\bigr)-\lambda\bigl(B^2\cap D_1) 
		&=
		\lambda\bigl((B^\delta)^2\cap [L-1, L]\times[L, L+\delta)\bigr)+O(\delta^2)
		\notag \\
		&=
		\delta\lambda(B\cap [L-1, L))+O(\delta^2) \notag \\
		&=
		\delta f_A(x-1)+O(\delta^2)\,.
	\end{align}
		Furthermore, $D^\delta_2\sm D_2\subseteq [L, L+\delta)\times [0, 1]$
	yields 
		\begin{align*}
		\lambda\bigl((B^\delta)^2\cap D^\delta_2\bigr)-\lambda\bigl(B^2\cap D_2) 
		&\le
		\lambda\bigl((B^\delta)^2\cap [L, L+\delta)\times[0, 1]\bigr)\\
		&=
		\delta\lambda(B\cap [0, 1])\\
		&=
		\delta f_A(x)\,.
	\end{align*}
		By adding this to~\eqref{eq:1446} we infer 
		\[
		\Phi(A^\delta)-\Phi(A)
		\le 
		\delta\bigl(f_A(x-1)+f_A(x)\bigr)+O(\delta^2)\,,
	\]
		which proves~\eqref{eq:1419}.
\end{proof}

	The Claims~\ref{claim:1}~and~\ref{claim:2} show that $A$ satisfies the 
	assumptions of Lemma~\ref{lemma:4}. So $\eta(A)\geq 0$, which contradicts 
	the choice of $A$.
\end{proof}

\begin{proof}[Proof of Theorem~\ref{thm:main}]
	Owing to the regularity of the Lebesgue measure, there exists a~sequence 
	of sets $(A_n)_{n\in \NN}$, each a union of finitely many intervals, 
	that approximates $A$ in the sense that 
	$\lim_{n\to\infty}\lambda(A_n\triangle A)=0$. As this entails 
	$\lim_{n\to\infty}\lambda((A_n)^2\triangle A^2)=0$, 
	we have $\lim_{n\to\infty} \Phi(A_n)=\Phi(A)$. Together with 
	$\lim_{n\to\infty} \lambda(A_n)=\lambda(A)$ and the continuity of~$W$, 
	this leads to $\lim_{n\to\infty} \eta(A_n)=\eta(A)$. By Proposition~\ref{prop:32} 
	each term of this sequence is nonnegative and thus $\eta(A)$ cannot be 
	negative either.
\end{proof}

\section{A concluding remark}\label{sec:conc}

Let us finally return to the discrete problem investigated by Dudek, Ruci\'{n}ski,
and the first author~\cite{ADR}. Given a finite set $A\subseteq \ZZ$ and a positive 
integer~$m$ we write $\Psi_m(A)$ for the number of pairs $(x, y)\in A^2$ such 
that $x<y\le x+m$. For any three integers $k\ge 0$ and $m, n\ge 1$, the 
minimum value of $\Psi_m(A_1)+\dots+\Psi_m(A_{k+1})$ taken over all partitions 
$[n]=A_1\dcup\dots\dcup A_{k+1}$ is denoted by $f(k, m, n)$. 

In graph theoretic term this function can be viewed as follows. The $m^\mathrm{th}$
power of the path on $n$ vertices is a graph $P^m_n$ with vertex set 
$[n]$ and all edges $\{x, y\}$ satisfying $|x-y|\le m$. 
With every vertex colouring $\phi\colon V(P_n^m)\lra [k+1]$ we associate the 
partition $[n]=A_1\dcup\dots\dcup A_{k+1}$ defined by $A_i=\phi^{-1}(i)$ for 
every $i\in [k+1]$. The sum $\Psi_m(A_1)+\dots+\Psi_m(A_{k+1})$ is then the same 
as the number of edges of $P^m_n$ which connect vertices of the same colour. 
Thus, $f(k, m, n)$ is the minimum number of monochromatic edges of $P^m_n$ that
a $(k+1)$-colouring of $V(P^m_n)$ can have.

It is plain that $f(k, m, n_1+n_2)\le f(k, m, n_1)+f(k, m, n_2)+\binom{m+1}2$
holds for all integers $k\ge 0$ and $m, n_1, n_2\ge 1$. Thus, for fixed $k$ and $m$,
the function $n\longmapsto f(k, m, n)+\binom{m+1}2$ is subadditive and, 
consequently, the limit 
\[
	\alpha(k, m)
	=
	\lim_{n\to\infty}\frac{f(k, m, n)}{n}
\]
exists. An explicit construction described in~\cite{ADR} shows 
\begin{equation}\label{eq:4100}
	\limsup_{m\to\infty}\frac{\alpha(k, m)}m
	\le 
	\sqrt{k^2+1}-k\,.
\end{equation}
As we shall explain below, our first theorem yields a complementary lower bound,
so that altogether we have 
\begin{equation}\label{eq:4141}
	\lim_{m\to\infty}\frac{\alpha(k, m)}m
	=  
	\sqrt{k^2+1}-k\,.
\end{equation}
	  
\begin{theorem}\label{thm:3}
	If $k\ge 0$ and $n\ge m\ge 1$, then 
		\[
		f(k, m, n)
		\ge
		\left((\sqrt{k^2+1}-k)m-\frac12\right)n-\frac{m^2}2\,.
	\]
	\end{theorem}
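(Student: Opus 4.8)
The plan is to derive Theorem~\ref{thm:3} from Theorem~\ref{thm:1} by a discretisation (``blow-up'') argument. Fix any partition $[n]=A_1\dcup\dots\dcup A_{k+1}$. Since $n\ge m$, the number $L=n/m$ satisfies $L\ge 1$, so Theorem~\ref{thm:1} applies to the circle $S=\RR/L\ZZ$. For $j\in[n]$ set $C_j=\bigl[\tfrac{j-1}m,\tfrac jm\bigr)\subseteq S$, so that $C_1,\dots,C_n$ tile $S$, and put $A_i^\star=\bigcup_{j\in A_i}C_j$. Then $A_1^\star\dcup\dots\dcup A_{k+1}^\star$ partitions $S$ into $k+1$ measurable parts, and Theorem~\ref{thm:1} yields $\sum_{i=1}^{k+1}\Phi(A_i^\star)\ge(\sqrt{k^2+1}-k)\tfrac nm$.

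The crucial step is an exact formula for $\Phi(A_i^\star)$ in combinatorial terms; suppose first that $n>m$. For a point $x=\tfrac{j-1+s}m\in C_j$ with $s\in[0,1)$, the unit arc $[x,x+1]$ covers the tail of $C_j$ of length $\tfrac{1-s}m$, the $m-1$ cells $C_{j+1},\dots,C_{j+m-1}$ entirely, and a head of $C_{j+m}$ of length $\tfrac sm$, where all indices are read modulo $n$ (these $m+1$ indices are distinct since $m<n$). Feeding this into $\Phi(A_i^\star)=\int_{A_i^\star}f_{A_i^\star}(x)\,\dd x$ and carrying out the elementary integration over $s$ gives
\begin{equation*}
	m^2\,\Phi(A_i^\star)
	=
	\frac{|A_i|}2+\sum_{d=1}^{m-1}c_d(A_i)+\frac12\,c_m(A_i)\,,
\end{equation*}
where $c_d(A_i)$ denotes the number of ordered pairs $(j,j')\in A_i^2$ with $j'-j\equiv d\pmod n$. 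Writing $\ell_e(A_i)$ for the number of unordered pairs $\{j,j'\}\subseteq A_i$ at distance $|j-j'|=e$, we have $c_d(A_i)=\ell_d(A_i)+\ell_{n-d}(A_i)$, the two summands corresponding to ``straight'' and ``wrap-around'' pairs, and $\Psi_m(A_i)=\sum_{d=1}^{m}\ell_d(A_i)$. Substituting and rearranging, the displayed identity turns into
\begin{equation*}
	\Psi_m(A_i)
	=
	m^2\,\Phi(A_i^\star)+\tfrac12\ell_m(A_i)-\tfrac{|A_i|}2-E_i
	\ \ge\
	m^2\,\Phi(A_i^\star)-\tfrac{|A_i|}2-E_i\,,
\end{equation*}
where $E_i=\sum_{d=1}^{m-1}\ell_{n-d}(A_i)+\tfrac12\ell_{n-m}(A_i)\ge 0$ collects the wrap-around contributions.

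It remains to control $\sum_i E_i$. The key point is that $\sum_{i=1}^{k+1}\ell_e(A_i)\le\ell_e([n])=n-e$, since the $n-e$ pairs of $[n]$ at distance $e$ are distributed among the colour classes; hence
\[
	\sum_{i=1}^{k+1}E_i\ \le\ \sum_{d=1}^{m-1}d+\frac m2\ =\ \frac{m^2}2\,.
\]
Summing the preceding display over $i$ and using the bound from Theorem~\ref{thm:1} gives
\[
	\sum_{i=1}^{k+1}\Psi_m(A_i)
	\ \ge\
	m^2(\sqrt{k^2+1}-k)\tfrac nm-\tfrac n2-\tfrac{m^2}2
	\ =\
	\Bigl((\sqrt{k^2+1}-k)m-\tfrac12\Bigr)n-\tfrac{m^2}2\,,
\]
which, the partition being arbitrary, is exactly the asserted lower bound for $f(k,m,n)$. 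The degenerate case $n=m$ (where $S$ has circumference $1$ and the arc $[x,x+1]$ wraps all the way around) needs separate but trivial treatment: then $P_m^m$ is a complete graph and the bound follows from the convexity of $t\mapsto\binom t2$; alternatively the argument above survives verbatim under the convention $\ell_0(A_i)=|A_i|$. The only point that genuinely needs care is the bookkeeping behind the displayed identity for $\Phi(A_i^\star)$: the two half-weights — from the fractional cell $C_{j+m}$ at the far end of the unit window and from the coefficient of $c_m$ — must be tracked exactly, since they are precisely what sharpens a careless error of order $\tfrac{m^2+m}2$ to the stated $\tfrac{m^2}2$.
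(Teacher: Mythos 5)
Your proof is correct and follows essentially the same route as the paper: blow up the discrete partition to a partition of the circle $\RR/(n/m)\ZZ$ into unions of cells of length $\frac1m$, apply Theorem~\ref{thm:1}, and then account for the diagonal loss of $\frac n2$ and the wrap-around loss of $\frac{m^2}2$. The only (immaterial) differences are bookkeeping: you prove an exact identity for $m^2\Phi(A_i^\star)$ and bound the wrap-around pairs combinatorially, whereas the paper bounds the wrap-around region by $\lambda(D_2)\le\frac12$ and the cell overlaps $\lambda(I_x\times I_y\cap D_1)$ directly, which also handles $n=m$ without a separate case.
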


This estimate clearly implies 

\[
	\alpha(k, m)\ge (\sqrt{k^2+1}-k)m-\frac12\,,
\]
which in turn reveals
\[
	\liminf_{m\to\infty}\frac{\alpha(k, m)}m
	\ge 
	\sqrt{k^2+1}-k\,.
\]
Together with~\eqref{eq:4100} this proves indeed our claim~\eqref{eq:4141}.

\begin{proof}[Proof of Theorem~\ref{thm:3}]
	Fix a partition $[n]=A_1\dcup\dots\dcup A_{k+1}$ such that 
		\[
		f(k, m, n)
		=
		\Psi_m(A_1)+\dots+\Psi_m(A_{k+1})\,.
	\]
		For $L=n/m$ we define the partitions 
	$[0, L)=I_1\dcup\dots\dcup I_n=B_1\dcup\dots\dcup B_{k+1}$
	by setting first $I_j=\bigl[\frac{j-1}m, \frac jm\bigr)$ for every $j\in [n]$
	and then $B_i=\bigdcup_{j\in A_i} I_j$ for every $i\in [k+1]$. 
	Due to $L\ge 1$, Theorem~\ref{thm:1} 
	tells us that the sets $D_1$, $D_2$ defined in~\eqref{eq:3D1}
	and~\eqref{eq:3D2} satisfy 
		\[
		\lambda\bigl((B_1^2\dcup\dots\dcup B_{k+1}^2)\cap (D_1\dcup D_2)\bigr)
		\ge
		(\sqrt{k^2+1}-k)L\,.
	\]
		Because of $\lambda(D_2)\le \frac12$, this entails
		\[
		\sum_{i=1}^{k+1}\sum_{(x, y)\in A_i^2} \lambda(I_x\times I_y\cap D_1)		
		\ge
		\frac{(\sqrt{k^2+1}-k)n}m-\frac12\,.
	\]
		As the term $\lambda(I_x\times I_y\cap D_1)$ is
		\begin{enumerate}
		\item[$\bullet$] $0$ if $y<x$ or $y>x+m$,
		\item[$\bullet$] $1/2m^2$ if $x=y$,
		\item[$\bullet$] and at most $1/m^2$ if $x< y\le x+m$,
	\end{enumerate}
	this yields 
		\[
		\frac{\Psi_m(A_1)+\dots+\Psi_m(A_{k+1})}{m^2}
		\ge
		\frac{(\sqrt{k^2+1}-k)n}m-\frac12-\frac n{2m^2}
	\]
		and the desired inequlity follows upon multiplying by $m^2$. 
\end{proof}

\begin{bibdiv}
\begin{biblist}
\bib{ADR}{article}{
   author={Antoniuk, Sylwia},
   author={Dudek, Andrzej},
   author={Ruci\'{n}ski, Andrzej},
   title={Powers of Hamiltonian cycles in randomly augmented Dirac
   graphs---The complete collection},
   journal={J. Graph Theory},
   volume={104},
   date={2023},
   number={4},
   pages={811--835},
   issn={0364-9024},
   review={\MR{4660815}},
   doi={10.1002/jgt.23001},
}

\bib{LPS}{article}{
   author={Liu, Hong},
   author={Pikhurko, Oleg},
   author={Staden, Katherine},
   title={The exact minimum number of triangles in graphs with given order
   and size},
   journal={Forum Math. Pi},
   volume={8},
   date={2020},
   pages={e8, 144},
   review={\MR{4089395}},
   doi={10.1017/fmp.2020.7},
}

\bib{Lov}{book}{
   author={Lov\'{a}sz, L\'{a}szl\'{o}},
   title={Large networks and graph limits},
   series={American Mathematical Society Colloquium Publications},
   volume={60},
   publisher={American Mathematical Society, Providence, RI},
   date={2012},
   pages={xiv+475},
   isbn={978-0-8218-9085-1},
   review={\MR{3012035}},
   doi={10.1090/coll/060},
}

\bib{LoSi}{article}{
   author={Lov{\'a}sz, L.},
   author={Simonovits, M.},
   title={On the number of complete subgraphs of a graph. II},
   conference={
      title={Studies in pure mathematics},
   },
   book={
      publisher={Birkh\"auser, Basel},
   },
   date={1983},
   pages={459--495},
   review={\MR{820244}},
}

\bib{Niki}{article}{
   author={Nikiforov, V.},
   title={The number of cliques in graphs of given order and size},
   journal={Trans. Amer. Math. Soc.},
   volume={363},
   date={2011},
   number={3},
   pages={1599--1618},
   issn={0002-9947},
   review={\MR{2737279}},
   doi={10.1090/S0002-9947-2010-05189-X},
}

\bib{Raz}{article}{
   author={Razborov, Alexander A.},
   title={On the minimal density of triangles in graphs},
   journal={Combin. Probab. Comput.},
   volume={17},
   date={2008},
   number={4},
   pages={603--618},
   issn={0963-5483},
   review={\MR{2433944}},
   doi={10.1017/S0963548308009085},
}

\bib{cdt}{article}{
   author={Reiher, Chr.},
   title={The clique density theorem},
   journal={Ann. of Math. (2)},
   volume={184},
   date={2016},
   number={3},
   pages={683--707},
   issn={0003-486X},
   review={\MR{3549620}},
   doi={10.4007/annals.2016.184.3.1},
}

\bib{marcelo}{article}{
	author={Reiher, Chr.},
   author={R\"{o}dl, Vojt\v{e}ch},
   author={Sales, Marcelo},
   title={Colouring versus density in integers and Hales-Jewett cubes},
   eprint={2311.08556},
}

\bib{Sz}{article}{
   author={Szemer\'{e}di, E.},
   title={On sets of integers containing no $k$ elements in arithmetic
   progression},
   journal={Acta Arith.},
   volume={27},
   date={1975},
   pages={199--245},
   issn={0065-1036},
   review={\MR{369312}},
   doi={10.4064/aa-27-1-199-245},
}

\bib{vdW}{article}{
   author={van der Waerden, Bartel Lendeert},
   title={Beweis einer Baudetschen Vermutung},
   journal={Nieuw. Arch. Wisk.},
   volume={15},
   date={1927},
   pages={212--216},
}

\end{biblist}
\end{bibdiv}

\end{document}